\newcommand{\labell}[1] {\label{#1}}
\numberwithin{equation}{section}
\newtheorem {Theorem}{Theorem}
\numberwithin{Theorem}{section}
\newtheorem {Lemma}[Theorem]    {Lemma}
\newtheorem {Claim}[Theorem]    {Claim}
\newtheorem {Proposition}[Theorem]{Proposition}
\newtheorem {Corollary}[Theorem]{Corollary}
\theoremstyle{definition}
\newtheorem{Definition}[Theorem]{Definition}
\theoremstyle{remark}
\newtheorem{Remark}[Theorem]{Remark}
\newtheorem{Example}[Theorem]{Example}
\chardef\csname pre amssym.def at\endcsname=\the\catcode`\@
\def\undefine#1{\let#1\undefined}
\def\newsymbol#1#2#3#4#5{\let\next@\relax
 \ifnum#2=\@ne\let\next@\msafam@\else
 \ifnum#2=\tw@\let\next@\msbfam@\fi\fi
 \mathchardef#1="#3\next@#4#5}
\def\mathhexbox@#1#2#3{\relax
 \ifmmode\mathpalette{}{\m@th\mathchar"#1#2#3}%
 \else\leavevmode\hbox{$\m@th\mathchar"#1#2#3$}\fi}
\def\hexnumber@#1{\ifcase#1 0\or 1\or 2\or 3\or 4\or 5\or 6\or 7\or 8\or
 9\or A\or B\or C\or D\or E\or F\fi}
\font\teneufm=eufm10
\font\seveneufm=eufm7
\font\fiveeufm=eufm5
\def    \eps    {\epsilon}
\newcommand{\CS}{{\mathcal S}}
\newcommand{\supp}{\operatorname{supp}}
\newcommand{\id}{{\mathit id}}
\newcommand{\const}{{\mathit const}}
\newcommand{\tf}{\tilde{f}}
\newcommand{\tgamma}{\tilde{\gamma}}
\newcommand{\tH}{\tilde{H}}
\newcommand{\Ii}{{\mathcal I}}
\def    \C      {{\mathbb C}}
\def    \R      {{\mathbb R}}
\def    \Z      {{\mathbb Z}}
\def    \12    {{\frac{1}{2}}}
\def    \rk     {\operatorname{rk}}
\def    \Sp     {\operatorname{Sp}}
\def    \U     {\operatorname{U}}
\def    \HF     {\operatorname{HF}}
\def    \HM     {\operatorname{HM}}
\def    \MUCZ  {\operatorname{\mu_{\scriptscriptstyle{CZ}}}}
\begin{document}


\setlength{\smallskipamount}{6pt}
\setlength{\medskipamount}{10pt}
\setlength{\bigskipamount}{16pt}





\title[Local Floer Homology and the Action Gap]
{Local Floer Homology and the Action Gap}

\author[Viktor Ginzburg]{Viktor L. Ginzburg}
\author[Ba\c sak G\"urel]{Ba\c sak Z. G\"urel}

\address{VG: Department of Mathematics, UC Santa Cruz,
Santa Cruz, CA 95064, USA}
\email{ginzburg@math.ucsc.edu}

\address{BG: Centre de recherches math\'ematiques,
Universit\'e de Montr\'eal,
P.O. Box 6128, Centre--ville Station,
Montr\'eal, QC, H3C 3J7, CANADA}
\email{gurel@crm.umontreal.ca}

\subjclass[2000]{53D40, 37J45, 70H12}
\date{\today}
\thanks{The work is partially supported by the NSF and by the faculty
research funds of the University of California, Santa Cruz.}


\begin{abstract}
In this paper, we study the behavior of the local Floer homology of an
isolated fixed point and the growth of the action gap under
iterations. To be more specific, we prove that an isolated fixed point
of a Hamiltonian diffeomorphism remains isolated for a certain class
of iterations (the so-called admissible iterations) and that the local
Floer homology groups for all such iterations are isomorphic to each
other up to a shift of degree.  Furthermore, we study the
pair-of-pants product in local Floer homology, and characterize a
particular class of isolated fixed points (the symplectically
degenerate maxima), which plays an important role in the proof of the
Conley conjecture. The proofs of these facts rely on an observation
that for a general diffeomorphism, not necessarily Hamiltonian, an
isolated fixed point remains isolated under all admissible iterations.
Finally, we apply these results to show that for a quasi-arithmetic
sequence of admissible iterations of a Hamiltonian diffeomorphism with
isolated fixed points the minimal action gap is bounded from above
when the ambient manifold is closed and symplectically aspherical.
This theorem is a generalization of the Conley conjecture.

\end{abstract}

\maketitle

\section{Introduction}
\labell{sec:intro}

In this paper, we analyze the behavior of the local Floer homology of
an isolated fixed point under iterations of a Hamiltonian
diffeomorphism.  To be more precise, we prove that an isolated fixed
point remains isolated for a certain class of iterations of the
diffeomorphism (the so-called admissible iterations) and that the
local Floer homology groups for all such iterations persist, i.e.,
these groups are isomorphic to each other up to a shift of degree.
Then we use this result to study the pair-of-pants product in local
Floer homology, and characterize in homological and geometrical terms
a particular class of isolated fixed points, the so-called
symplectically degenerate maxima, that play an important role in the
proof of the Conley conjecture discussed below. The proofs of these
facts rely on an observation of independent interest that for a
general diffeomorphism, not necessarily Hamiltonian, an isolated fixed
point remains isolated under admissible iterations.

As an application of the persistence of local Floer homology to global
Hamiltonian dynamics on closed, symplectically aspherical manifolds,
we show that for a quasi-arithmetic sequence of admissible iterations
of a Hamiltonian diffeomorphism with isolated fixed points the minimal
action gap is bounded from above. This theorem can be viewed as a
generalization of the Conley conjecture established in
\cite{Gi:conley,Hi} (see also \cite{FrHa,Hi,LeC}) and asserting that a
Hamiltonian diffeomorphism of a closed symplectically aspherical
manifold has simple periodic points of arbitrarily large period.

\subsection{Main results}
\labell{sec:result}

Let $M$ be a smooth manifold and let $p$ be a fixed point of a
diffeomorphism $\varphi\colon M\to M$. We call a positive integer $k$
\emph{admissible} (with respect to $p$) if $\lambda^k\neq 1$ for all
eigenvalues $\lambda\neq 1$ of $d\varphi_p\colon T_pM\to T_pM$. In
other words, $k$ is admissible if and only if $d\varphi_p^k$ and
$d\varphi_p$ have the same generalized eigenvectors with eigenvalue
one. For instance, when no eigenvalue $\lambda\neq 1$ is a root of
unity, all $k> 0$ are admissible. An iteration $k$ is called
\emph{good} with respect to $p$ if the parity of the number of pairs
$\{\lambda,\lambda^{-1}\}$ of negative real eigenvalues, counted with
multiplicity, is the same for $d\varphi_p^k$ and $d\varphi_p$. (Since
the eigenvalue $-1$ has even multiplicity, the number of
pairs is well defined.) Otherwise, $k$ is
a \emph{bad} iteration.  Furthermore, $k$ is said to be admissible
(good) for $\varphi$ if it is admissible (good) with respect to every
fixed point of $\varphi$.

Assume now that $M$ is symplectic and $\varphi=\varphi_H$ is the
time-one map of the Hamiltonian time-dependent flow $\varphi_H^t$
generated by a one-periodic in time Hamiltonian $H$. Let $\gamma$ be a
one-periodic orbit of $H$. Then $k$ is (good) admissible with respect
to $\gamma$ if it is (good) admissible for $\varphi_H$ with respect to
the fixed point $p=\gamma(0)$.  In what follows, we denote by $H^{\#
  k}$ and $\gamma^k$ the $k$th iterations of $H$ and, respectively,
$\gamma$. (By definition, the Hamiltonian $H^{\# k}$ generates the
time-dependent flow $(\varphi_H^t)^k$ and
$\gamma^k(t)=\varphi_{H^{\#k}}^t(p)$, where $p=\gamma(0)$ and $t\in
[0,\, 1]$.  One can think of $\gamma^k$ as the $k$-periodic orbit
$\gamma(t)$, $t\in [0,\, k]$, of $H$; see Section
\ref{sec:notations}.)  When $M$ is symplectically aspherical, the
local Floer homology groups $\HF_*(H,\gamma)$ are defined similarly to
ordinary Floer homology, but taking into account only one-periodic
orbits that $\gamma$ splits into under a non-degenerate perturbation
of $H$.  For instance, when $\gamma$ is non-degenerate,
$\HF_l(H,\gamma)$ is $\Z_2$ if $l$ is equal to the Conley--Zehnder
index of $\gamma$ and is zero otherwise. Let also $\Delta_H(\gamma)$
and $A_H(\gamma)$ denote the mean index of $\gamma$ and, respectively,
the action of $H$ on $\gamma$. We refer the reader to Sections
\ref{sec:mean-index} and \ref{sec:LFH} for a detailed discussion of
the mean index and local Floer homology, and for further references.

The first result of this paper asserts that up to a shift of degree
$s_k$ the local Floer homology groups $\HF_*(H^{\# k},\gamma^k)$ are
isomorphic for all admissible iterations $\varphi_H^k$ and, moreover,
the mean shift $s_k/k$ converges to the mean index $\Delta_H(\gamma)$.
Thus, the general behavior of the local Floer homology groups under
iterations is similar to that for non-degenerate periodic orbits.

\begin{Theorem}
\labell{thm:persist-lf}

Let $M$ be a symplectically aspherical manifold and let $\gamma$ be an
isolated one-periodic orbit of a Hamiltonian $H\colon S^1\times M\to
\R$. Then $\gamma^k$ is also an isolated one-periodic orbit of $H^{\#
k}$ for all admissible $k$ and the local Floer homology groups of $H$
and $H^{\# k}$ coincide up to a shift of degree:
\begin{equation}
\labell{eq:case2}
\HF_*(H^{\# k} ,\gamma^k)=\HF_{*+s_k}(H,\gamma)
\quad\text{for some $s_k$.}
\end{equation}
Furthermore, $\lim_{k\to \infty}s_k/k=\Delta_H(\gamma)$ and the shift
$s_k$ is even if $k$ is good, provided that $\HF_*(H,\gamma)\neq 0$
and hence the shifts $s_k$ are uniquely determined by
\eqref{eq:case2}.  Moreover, when $\Delta_H(\gamma)=0$ and
$\HF_n(H,\gamma)\neq 0$, the orbit $\gamma$ is strongly degenerate
(see Section \ref{sec:notations}) and all $s_k$ are zero.

\end{Theorem}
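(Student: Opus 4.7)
The strategy is to approximate $H$ by a non-degenerate Hamiltonian $\tilde H$, identify the generators of the local Floer complexes of $H$ and $H^{\# k}$ up to a common degree shift, and then match their differentials by a confinement argument.

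Isolation of $\gamma^k$ as a one-periodic orbit of $H^{\# k}$ follows from the purely dynamical observation announced in the introduction: for any diffeomorphism $\varphi$, an isolated fixed point $p$ with $k$ admissible at $p$ is isolated for $\varphi^k$ as well. Applying this with $\varphi=\varphi_H$ and $p=\gamma(0)$ gives the first assertion. Next, choose a $C^2$-small non-degenerate perturbation $\tilde H$ of $H$, compactly supported near $\gamma$, and let $\gamma_1,\dots,\gamma_m$ be the resulting non-degenerate one-periodic orbits in a small neighborhood of $\gamma$; these generate a complex computing $\HF_*(H,\gamma)$. Since admissibility of $k$ is an open condition on the linearized flow, $k$ remains admissible at every $\gamma_j(0)$ for $\tilde H$ close enough to $H$, and each $\gamma_j^k$ is then a non-degenerate one-periodic orbit of $\tilde H^{\# k}$. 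Applying the dynamical isolation lemma to $\tilde H$ together with a compactness argument shows that $\gamma_1^k,\dots,\gamma_m^k$ are exactly the one-periodic orbits of $\tilde H^{\# k}$ near $\gamma^k$, and hence generate a complex computing $\HF_*(H^{\# k},\gamma^k)$. By continuity and integrality, the index difference $\MUCZ(\gamma_j^k)-\MUCZ(\gamma_j)$ takes a common value $s_k$ for all $j$, provided $\tilde H$ is close enough to $H$.

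The main obstacle is to identify the Floer differentials of the two complexes up to the shift $s_k$. The key input is the standard confinement estimate for local Floer homology: any Floer cylinder counted in the local complex for $\tilde H^{\# k}$ stays $C^0$-close to $\gamma^k$, and similarly for $\tilde H$ near $\gamma$. Unfolding a $1$-periodic loop for $\tilde H^{\# k}$ as a $k$-periodic loop for $\tilde H$, the admissibility hypothesis together with the dynamical isolation lemma forces every such loop in a sufficiently small neighborhood of $\gamma$ to be an iterate of one of the $\gamma_j$. Combining this with the confinement estimates and standard transversality yields a natural bijection between the moduli spaces of Floer cylinders modulo $\R$-translation, and hence an isomorphism $\CF_*(\tilde H^{\# k},\gamma^k)\cong\CF_{*-s_k}(\tilde H,\gamma)$ of chain complexes, proving \eqref{eq:case2}.

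The remaining assertions follow from the just-established persistence combined with standard properties of the Conley--Zehnder and mean indices. The universal bound $|\MUCZ(\gamma^k)-k\Delta_H(\gamma)|\le n$, applied to a small non-degenerate perturbation of $\gamma$, yields $s_k/k\to\Delta_H(\gamma)$. For good $k$, the parity of $\MUCZ(\gamma_j^k)-\MUCZ(\gamma_j)$ equals the parity of the change in the number of pairs $\{\lambda,\lambda^{-1}\}$ of negative real eigenvalues of the linearization under iteration, which vanishes by definition; so $s_k$ is even. Finally, assume $\Delta_H(\gamma)=0$ and $\HF_n(H,\gamma)\neq 0$. The isomorphism gives $\HF_{n-s_k}(H^{\# k},\gamma^k)\neq 0$ with $\Delta_H(\gamma^k)=0$, and the support bound $\HF_l\neq 0\Rightarrow|l-\Delta_H(\gamma^k)|\le n$ forces $s_k\in[0,2n]$ for every admissible $k$. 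A further analysis of the CZ-index jumps of iterates in terms of the symplectic normal form of $d(\varphi_H)_{\gamma(0)}$ then shows that a sequence of shifts bounded uniformly with $s_k/k\to 0$ is only possible when the linearization is unipotent, i.e., $\gamma$ is strongly degenerate; under this hypothesis every positive integer is admissible and $\MUCZ(\gamma^k)$ is constant in $k$, yielding $s_k=0$ for all $k$.
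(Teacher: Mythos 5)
Your reduction of the first assertion to the dynamical isolation result, and your derivation of $s_k/k\to\Delta_H(\gamma)$ from the support bound (LF5) together with $\Delta_{H^{\#k}}(\gamma^k)=k\Delta_H(\gamma)$, match the paper. But the central step of your argument --- the claimed chain-level isomorphism $\CF_*(\tilde H^{\#k},\gamma^k)\cong\CF_{*-s_k}(\tilde H,\gamma)$ --- has a fatal gap, in fact two. First, on generators: a non-degenerate perturbation $\tilde H$ of $H$ controls only the \emph{one}-periodic orbits near $\gamma$; the Hamiltonian $\tilde H^{\#k}$ need not be non-degenerate, and its one-periodic orbits near $\gamma^k$ correspond to \emph{$k$-periodic} points of $\varphi_{\tilde H}$, which can include genuine $k$-periodic orbits hopping among (or wandering between) the $\gamma_j(0)$ and not of the form $\gamma_j^k$. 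Proposition \ref{prop:persist-iso2} does not apply here: it requires $p$ to be a fixed point of every map in the family, whereas $p$ is not fixed by $\varphi_{\tilde H}$, and the neighborhoods it produces around each $\gamma_j(0)$ need not cover the region where the local complex lives. The assertion that the shifts $\MUCZ(\gamma_j^k)-\MUCZ(\gamma_j)$ are independent of $j$ also does not follow from ``continuity and integrality''; it requires an analysis of how the degenerate part of $d\varphi_p$ resolves under perturbation. Second, and decisively, on differentials: there is no ``natural bijection'' between the moduli spaces of Floer cylinders for $\tilde H$ and for $\tilde H^{\#k}$. These are solutions of different equations (period $1$ versus period $k$ in the loop variable); an index-one $k$-periodic Floer cylinder has no reason to be $1$-periodic, and no confinement or transversality argument identifies the two counts. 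If such an identification were available, most of the difficulty in iterated Floer theory (including the Conley conjecture itself) would evaporate.

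The paper is structured precisely to avoid comparing Floer differentials across iterations. After reducing to a constant orbit $p$ in $\R^{2n}$, it treats the strongly degenerate case by passing to generating functions: $\varphi^k$ has generating function $F_k$, one proves that $p$ is a \emph{uniformly} isolated critical point of the linear homotopy $tF_k+(1-t)kF_1$, and then homotopy invariance of local \emph{Morse} homology plus Lemma \ref{lemma:LFH-LMH} give $\HF_*(H^{\#k},p)$ up to an even shift controlled by (LF6)/(MI7); the general case follows by deforming $\varphi$ to a split (non-degenerate)$\times$(strongly degenerate) map through diffeomorphisms with uniformly isolated fixed point and applying the K\"unneth formula. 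You would need to either adopt this route or supply a genuinely new argument for the moduli-space identification. Finally, in the ``moreover'' part, strong degeneracy of $\gamma$ is immediate from (LF5): a weakly non-degenerate orbit with $\Delta_H(\gamma)=0$ has support in the \emph{open} interval $(-n,n)$, contradicting $\HF_n(H,\gamma)\neq 0$; your appeal to the asymptotics of the shifts is unnecessary, and your phrase ``$\MUCZ(\gamma^k)$ is constant in $k$'' is not meaningful for a degenerate orbit --- the vanishing $s_k=0$ comes instead from (MI7) applied to the nearly autonomous Hamiltonians built from the generating functions.
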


\begin{Remark}
\labell{rmk:persist-lf} The assumption that $M$ is symplectically
aspherical is not essential in Theorem \ref{thm:persist-lf}. The local
Floer homology groups $\HF_l(H,\gamma)$ are defined for an arbitrary
symplectic manifold $M$, although in this case the orbit $\gamma$ must
be equipped with a capping. With this modification, Theorem
\ref{thm:persist-lf}, being a local result, holds for any symplectic
manifold.
\end{Remark}

One ingredient of the proof of Theorem \ref{thm:persist-lf} concerns
``persistence of isolation'' for fixed points of smooth, not
necessarily Hamiltonian, diffeomorphisms and is of independent
interest. This is the following result proved in
Section~\ref{sec:persist-iso}:

\begin{Proposition}
\labell{prop:persist-iso}

Let $p\in M$ be an isolated fixed point of a $C^1$-smooth
diffeomorphism $\varphi\colon M\to M$. Then $p$ is also an isolated
fixed point of $\varphi^k$ for every admissible~$k$.
\end{Proposition}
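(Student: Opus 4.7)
The plan is a rescaling argument by contradiction. First, I would observe that admissibility descends to divisors: if $d\mid k$ and $\lambda^d = 1$ for some eigenvalue $\lambda\ne 1$ of $A := d\varphi_p$, then $\lambda^k = 1$, contradicting admissibility of $k$. Suppose $p$ is not isolated for $\varphi^k$; then there exist $q_n\to p$ with $q_n\ne p$ and $\varphi^k(q_n) = q_n$. The minimal $\varphi$-period $d_n$ of $q_n$ divides $k$ and is at least $2$ (otherwise $q_n$ would contradict isolation of $p$ as a fixed point of $\varphi$). Passing to a subsequence, we may assume $d_n = d$ is constant with $d\ge 2$, and by the above $d$ is admissible.

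Work in a local chart around $p = 0$, set $q_n^{(j)} := \varphi^j(q_n)$ for $0\le j < d$, and form the centered, rescaled orbit $\tilde q_n^{(j)} := (q_n^{(j)} - c_n)/\eps_n$, where $c_n := \frac{1}{d}\sum_{j} q_n^{(j)}$ and $\eps_n := \max_j \|q_n^{(j)} - c_n\|$ (positive, since the orbit has $d\ge 2$ distinct points). By construction $\max_j\|\tilde q_n^{(j)}\| = 1$ and $\sum_j \tilde q_n^{(j)} = 0$. The $C^1$ expansion of $\varphi$ around $c_n$, together with uniform continuity of $d\varphi$ near $p$, turns $q_n^{(j+1)} = \varphi(q_n^{(j)})$ into $\tilde q_n^{(j+1)} = s_n + A\,\tilde q_n^{(j)} + o(1)$, where $s_n := (\varphi(c_n) - c_n)/\eps_n$ and the $o(1)$ is uniform in $j$. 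Summing this relation over $j\bmod d$ and using $\sum_j \tilde q_n^{(j)} = 0$ forces $d\,s_n = o(1)$, hence $s_n\to 0$. Extracting a convergent subsequence, the limiting $d$-tuple satisfies the purely linear relation $\tilde q^{(j+1)} = A\,\tilde q^{(j)}$.

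Since $A^d\tilde q^{(0)} = \tilde q^{(d)} = \tilde q^{(0)}$, we have $\tilde q^{(0)}\in\ker(A^d - I)$. Admissibility of $d$ implies $\ker(A^d - I) = \ker(A - I)$: on the generalized $1$-eigenspace $V$, where $A|_V = I + N$ is nilpotent, one has $A^d - I = N\bigl(dI + \binom{d}{2}N + \cdots\bigr)$ with invertible second factor, so the kernel equals $\ker N$; on the $A$-invariant complement, no eigenvalue of $A$ is a $d$-th root of unity, so $A^d - I$ is invertible. Therefore $A\,\tilde q^{(0)} = \tilde q^{(0)}$ and $\tilde q^{(j)} = \tilde q^{(0)}$ for all $j$; combined with $d\,\tilde q^{(0)} = \sum_j \tilde q^{(j)} = 0$, this gives $\tilde q^{(j)} = 0$ for every $j$, contradicting $\max_j\|\tilde q^{(j)}\| = 1$. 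The main technical obstacle I anticipate is ensuring the $o(1)$ terms in the $C^1$ expansion remain $o(1)$ after division by $\eps_n$ and after summation over $j$; this is handled by uniform continuity of $d\varphi$ near $p$ and by the fact that $d$ is bounded by $k$.
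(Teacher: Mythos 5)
Your proof is correct, but it takes a genuinely different route from the paper's. The paper argues quantitatively: it splits $\R^m=V\times W$ according to whether the eigenvalues of $d\varphi_p$ lie near or away from $1$, eliminates the $V$-coordinates of a putative $k$-periodic orbit via the implicit function theorem, and then runs a discrete version of Yorke's period estimate on the $W$-component, namely the inequality $\|\xi\|_{L^1}\le c(k)\|\dot\xi\|_{L^1}$ for zero-mean maps $\xi\colon\Z_k\to\R^m$, applied to $\xi=\dot w$ to force $\dot w\equiv 0$. You instead blow up: you reduce to a divisor $d\ge 2$ of $k$ (correctly noting that admissibility descends to divisors), center the orbit at its barycenter, rescale by its radius, and pass to a limiting $d$-tuple satisfying the exact linear recursion $\tilde q^{(j+1)}=A\tilde q^{(j)}$; the identity $\ker(A^d-I)=\ker(A-I)$ --- your factorization $(I+N)^d-I=N\bigl(dI+\binom{d}{2}N+\cdots\bigr)$ is precisely where admissibility enters --- together with the zero-mean normalization then annihilates the limit, contradicting $\max_j\|\tilde q^{(j)}\|=1$. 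All the delicate points (positivity of $\eps_n$, uniformity of the $o(1)$ error via uniform continuity of $d\varphi$, and $s_n\to0$ from summing over the cycle) are handled correctly. Your route is softer and shorter: compactness replaces both the implicit function theorem and the explicit constant $c(k)$. What the paper's quantitative setup buys is the uniform, parametric statement (its Proposition \ref{prop:persist-iso2} and Remark \ref{rmk:persist-iso}), with an isolating neighborhood independent of the parameter $s$, which is what is actually invoked in the proof of Theorem \ref{thm:persist-lf}; your compactness argument can be adapted to that version by also extracting a convergent subsequence of parameters, but since a proof by contradiction does not by itself exhibit uniform neighborhoods, this adaptation would need to be spelled out.
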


\begin{Remark}
\labell{rmk:persist-iso}
In the proof of Theorem \ref{thm:persist-lf}, we will also make use of
a parametric version of Proposition \ref{prop:persist-iso}. Namely,
\emph{assume that $p\in M$ is a uniformly isolated fixed point of a
family of $C^1$-smooth diffeomorphisms $\varphi_s\colon M\to M$ with
$s\in [0,\,1]$, i.e., $p$ is the only fixed point of $\varphi_s$,
for all $s$, in some (independent of $s$) neighborhood of $p$. Then
$p$ is also a uniformly isolated fixed point of $\varphi^k_s$ for
every $k$ which is admissible for all $\varphi_s$.} The proof of
this generalization of Proposition \ref{prop:persist-iso} is given in
Section~\ref{sec:persist-iso}.
\end{Remark}

One-periodic orbits $\gamma$ with $\Delta_H(\gamma)=0$ and
$\HF_n(H,\gamma)\neq 0$ arise naturally in the proof of the Conley
conjecture (see \cite{Gi:conley,Hi}) and are referred to here as
\emph{symplectically degenerate maxima}. Utilizing Theorem
\ref{thm:persist-lf} and the results from \cite{Gi:conley}, we give
homological and geometrical characterizations of symplectically
degenerate maxima in Section \ref{sec:sdm}. Furthermore, we show that
the pair-of-pants product in $\HF_*(H,\gamma)$ has strong ``vanishing
properties'' detecting, in particular, symplectically degenerate
maxima. Namely, the product is nilpotent if and only if $\gamma$ is
not a symplectically degenerate maximum; see Section
\ref{sec:product}.

It is natural to consider Theorem \ref{thm:persist-lf} in the context
of the Shub--Sullivan theorem asserting that whenever $p$ is an
isolated fixed point for all iterations $\varphi^k$ of a $C^1$-smooth
map $\varphi$ (which is not required to be a diffeomorphism), the
index of $\varphi^k$ at $p$ is bounded; see \cite{SS}. In the setting
of Theorem \ref{thm:persist-lf}, the index of $\varphi_H^k$ at
$\gamma(0)$ is equal to $\sum_l (-1)^l \dim_{\Z_2}\HF_l(H^{\# k}
,\gamma^k)$. Thus, the absolute value of the index is independent of
$k$ and the index is bounded, as long as $k$ is admissible.
(However, this consequence of Theorem \ref{thm:persist-lf} can also be
extracted from the proofs of the Shub--Sullivan theorem and of
Proposition \ref{prop:persist-iso}, and hence holds in much greater
generality, cf.\ Remark \ref{rmk:SS}.) Using Theorem
\ref{thm:persist-lf}, it is easy to prove the following literal,
Hamiltonian analogue of the Shub--Sullivan theorem:

\begin{Corollary}
Let $\gamma$ be a one-periodic orbit of a Hamiltonian $H$ on a
symplectically aspherical manifold. Assume that the orbit $\gamma^k$
is isolated for all $k>0$. Then $\rk\HF_*(H^{\#
k},\gamma^k):=\sum_l\dim_{\Z_2}\HF_l(H^{\# k},\gamma^k)$ is bounded as
a function of $k$.
\end{Corollary}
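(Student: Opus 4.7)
The plan is to reduce the problem, via a simple combinatorial observation about the eigenvalue structure of $d(\varphi_H)_{\gamma(0)}$, to finitely many ``base iterations,'' and then invoke Theorem~\ref{thm:persist-lf}. Let $d_1,\dots,d_r$ be the orders of the root-of-unity eigenvalues $\lambda\neq 1$ of $d(\varphi_H)_{\gamma(0)}$ and set $D=\operatorname{lcm}(d_1,\dots,d_r)$, with $D=1$ if no such eigenvalues exist. To each $k>0$ I would associate $T_k=\{i:d_i\mid k\}$ and
\[
k_0=k_0(k):=\operatorname{lcm}\{d_i : i\in T_k\},
\]
using the convention $\operatorname{lcm}\emptyset=1$. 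By construction $k_0$ divides both $k$ and $D$, and one verifies that $T_{k_0}=T_k$: the inclusion $T_k\subseteq T_{k_0}$ is immediate from the definition of $k_0$, and conversely $d_j\mid k_0$ forces $d_j\mid k$ since $k_0\mid k$.

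The key step is to verify that $m:=k/k_0$ is admissible for $\varphi_H^{k_0}$ with respect to the fixed point $\gamma(0)$. The non-unit eigenvalues of $d(\varphi_H^{k_0})_{\gamma(0)}$ are precisely $\mu=\lambda^{k_0}$ with $\lambda^{k_0}\neq 1$, and one needs $\mu^m=\lambda^k\neq 1$. If $\lambda$ is not a root of unity this is automatic; if $\lambda$ is a root of unity of order $d_i$, then $\lambda^{k_0}\neq 1$ forces $i\notin T_{k_0}=T_k$, hence $d_i\nmid k$ and $\lambda^k\neq 1$.

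Granting this, I would apply Theorem~\ref{thm:persist-lf} to the Hamiltonian $H^{\# k_0}$ and its (by hypothesis isolated) one-periodic orbit $\gamma^{k_0}$ with the admissible iteration $m$, obtaining
\[
\HF_*(H^{\# k},\gamma^k)\cong\HF_{*+s}(H^{\# k_0},\gamma^{k_0})
\]
for some shift $s=s(k)$, and in particular $\rk\HF_*(H^{\# k},\gamma^k)=\rk\HF_*(H^{\# k_0},\gamma^{k_0})$. Since $k_0$ ranges only over the finite set of divisors of $D$ and each $\rk\HF_*(H^{\# k_0},\gamma^{k_0})$ is finite by isolation of $\gamma^{k_0}$, this yields the bound $\rk\HF_*(H^{\# k},\gamma^k)\le\max_{k_0\mid D}\rk\HF_*(H^{\# k_0},\gamma^{k_0})$, uniformly in $k$. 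The only conceptual obstacle is spotting the correct reduction $k\mapsto k_0$; once this ``minimal representative with the same degeneracy pattern'' is identified, the argument becomes a direct combinatorial application of Theorem~\ref{thm:persist-lf} and requires no further Floer-theoretic input.
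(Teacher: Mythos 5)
Your proof is correct and follows essentially the route the paper intends: the paper leaves the corollary as an easy consequence of Theorem~\ref{thm:persist-lf}, and the accompanying remark (reduction to a finite collection of iterations determined by the root-of-unity eigenvalues of $d\varphi_H$ at $\gamma(0)$) is exactly your observation that every $k$ is an admissible iterate $m=k/k_0$ of some $k_0$ dividing $D$. Your verification that $m$ is admissible for $\varphi_H^{k_0}$ and the resulting uniform bound $\max_{k_0\mid D}\rk\HF_*(H^{\# k_0},\gamma^{k_0})$ are exactly the intended filling-in of the details.
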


\begin{Remark}
  In fact, in this corollary and in the Shub--Sullivan theorem, it is
  sufficient to assume that $\gamma^k$ is isolated only for a certain
  finite collection of $k$. (This is a consequence of
  Proposition \ref{prop:persist-iso}.)  For instance, if no eigenvalue
  $\lambda\neq 1$ of $d\varphi_H$ is a root of unity, it suffices to
  require $\gamma$ to be isolated.
\end{Remark}

The analogy with the Shub--Sullivan theorem and with the results of
Gromoll and Meyer, \cite{GM2}, suggests a number of applications of
Theorem \ref{thm:persist-lf} to the existence problem for periodic
points of Hamiltonian diffeomorphisms. Namely, for some Hamiltonian
diffeomorphisms of non-compact manifolds or symplectomorphisms arising
in classical Hamiltonian dynamics, the rank of (filtered) Floer
homology appears to grow with the order of iteration, and then
the Hamiltonian Shub--Sullivan theorem implies the existence of
infinitely many periodic orbits.  Here, leaving these applications
aside, we focus on just one general result concerning the behavior of
the action spectrum of $H^{\# k}$. To state this result, let us recall
one more definition.

A strictly increasing, infinite sequence of positive integers
$$
\nu_1<\nu_2<\nu_3<\ldots
$$
is called \emph{quasi-arithmetic} if $\nu_{i+1}-\nu_i<\const$ for all
$i$ and some constant independent of $i$. For example, any set
containing an infinite arithmetic progression is
quasi-arithmetic. Furthermore, it is easy to see that whenever fixed
points of $\varphi$ are isolated, the set of (good) admissible
iterations is quasi-arithmetic.  (Indeed, the set of admissible
iterations is comprised of integers that are not divisible by the
degrees $q_1>1,\ldots, q_r>1$ of the roots of unity among the
eigenvalues $\lambda\neq 1$ of $d\varphi$ at the fixed points of
$\varphi$. This set contains the arithmetic sequence
$m_k=1+q_1\cdot\ldots\cdot q_r\cdot k$. To ensure that the iterations
are good, it suffices to add $q_0=2$ to the collection of
$q_1,\ldots,q_r$.)

\begin{Theorem}
\labell{thm:main}

Let $H\colon S^1\times M\to \R$ be a Hamiltonian on a closed
symplectically aspherical manifold $M$ such that all fixed points of
$\varphi_H$ are isolated.  Then there exist an infinite quasi-arithmetic
sequence ${\nu_i}$ of admissible iterations of $\varphi_H$, a sequence
$y_i$ of $\nu_i$-periodic orbits of $H$, and a one-periodic orbit $x$
of $H$ such that
\begin{itemize}
\item $|A_{H^{\#\nu_i}}(x^{\nu_i})-A_{H^{\#\nu_i}}(y_i)|\leq e$,
\item $|\Delta_{H^{\# \nu_i}}(x^{\nu_i})-\Delta_{H^{\#
\nu_i}}(y_i)|\leq \delta$,
\item $|A_{H^{\#\nu_i}}(x^{\nu_i})-A_{H^{\#\nu_i}}(y_i)|+
|\Delta_{H^{\# \nu_i}}(x^{\nu_i})-\Delta_{H^{\# \nu_i}}(y_i)|>0$
\end{itemize}
for some constants $e$ and $\delta$ independent of $i$. Furthermore,
any infinite quasi-arithmetic sequence of admissible iterations contains a
quasi-arithmetic subsequence $\nu_i$ with these properties.
\end{Theorem}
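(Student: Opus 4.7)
The plan is to prove this as a refinement of the Conley conjecture, with Theorem~\ref{thm:persist-lf} serving as the key technical input that makes admissible iterations tractable. Since $M$ is closed and all fixed points of $\varphi_H$ are isolated, there are only finitely many one-periodic orbits $x_1,\ldots,x_m$ of $H$. By the remark preceding the theorem, the set of admissible iterations is already quasi-arithmetic, so any prescribed infinite quasi-arithmetic sequence can be thinned to a sub-sequence consisting entirely of admissible iterations; this reduction takes care of the last assertion of the theorem once the main existence statement is established.

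The first step is to select a distinguished one-periodic orbit $x$ of $H$ that carries a nonzero class in the global Floer homology $\HF_*(H)\cong H_*(M;\Z_2)$; such an $x$ exists because $H_*(M;\Z_2)\neq 0$. By Theorem~\ref{thm:persist-lf}, for every admissible $\nu_i$ the iterate $x^{\nu_i}$ is an isolated $\nu_i$-periodic orbit of $H^{\#\nu_i}$ with $\HF_*(H^{\#\nu_i},x^{\nu_i})=\HF_{*-s_{\nu_i}}(H,x)$ and $s_{\nu_i}/\nu_i\to\Delta_H(x)$. In particular, $x^{\nu_i}$ always carries a nonzero class in the Floer complex of $H^{\#\nu_i}$, while the total rank of $\HF_*(H^{\#\nu_i})$ is bounded by $\dim H_*(M;\Z_2)$ uniformly in $i$. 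Let $c$ denote the action selector attached to the homology class carried by $x$; standard subadditivity and Lipschitz estimates for spectral invariants should yield a constant $e$ with $|c(H^{\#\nu_i})-\nu_i A_H(x)|\le e$, so that the orbit $y_i$ realizing $c(H^{\#\nu_i})$ satisfies the first bullet, using $A_{H^{\#\nu_i}}(x^{\nu_i})=\nu_i A_H(x)$ in the symplectically aspherical setting. The mean-index bound then follows from Conley--Zehnder index estimates for spectrum-realizing orbits combined with the convergence $s_{\nu_i}/\nu_i\to\Delta_H(x)$.

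The third bullet, the nontriviality $y_i\neq x^{\nu_i}$, is the main difficulty, and the argument splits into two cases. If $H$ admits a symplectically degenerate maximum, take $x$ to be this SDM; then $\Delta_H(x)=0$ and $s_{\nu_i}=0$ by Theorem~\ref{thm:persist-lf}, and the Hingston--Ginzburg construction of \cite{Gi:conley,Hi} produces $\nu_i$-periodic orbits $y_i\neq x^{\nu_i}$ close in action to $x^{\nu_i}$ via Floer continuation. If no SDM exists, argue by contradiction: if along infinitely many $\nu_i$ the spectrum-realizing orbit coincided with $x^{\nu_i}$ for every possible choice of one-periodic $x$, then the iterations of the finitely many orbits $x_j$ would exhaust all spectrum-realizers with bounded error, and combining the mean-index convergences $s_{\nu_i}/\nu_i\to\Delta_H(x_j)$ with the homological characterization of SDMs from Section~\ref{sec:sdm} would force one of the $x_j$ itself to be a symplectically degenerate maximum, contradicting the case hypothesis.

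The principal obstacle is thus the non-SDM case, where one must combine Theorem~\ref{thm:persist-lf} (persistence with the shift $s_k$), the finite-rank constraint on $\HF_*(H^{\#\nu_i})$, and the SDM characterization of Section~\ref{sec:sdm} to derive the above contradiction. The quasi-arithmetic refinement asserted in the last sentence follows from a pigeonhole over the finite set $\{x_1,\ldots,x_m\}$ combined with the quasi-arithmetic structure of admissible iterations.
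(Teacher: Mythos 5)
Your overall skeleton (pick $x$ with nonzero local Floer homology, split into an SDM case and a non-SDM case, reduce the last assertion via the quasi-arithmetic structure of admissible iterations) matches the paper, and your SDM case is essentially the paper's ``degenerate'' case. But the heart of the non-degenerate case is missing, and the step you substitute for it does not work. You claim that subadditivity and Lipschitz estimates for spectral invariants yield $|c(H^{\#\nu_i})-\nu_i A_H(x)|\le e$. Subadditivity gives $c(H^{\#\nu_i})\le \nu_i\, c(H)$ (and a matching lower bound in terms of a dual invariant), but there is no reason for $c(H)$ to equal $A_H(x)$ for your chosen $x$, nor for the deviation of $c(H^{\#\nu_i})$ from $\nu_i A_H(x)$ to stay bounded; the boundedness of an action gap under iteration is precisely the nontrivial content of the theorem, so this step assumes what is to be proved. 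The paper instead normalizes $A_H(x)=0$, fixes a window $(a,b)\ni 0$, and runs a trichotomy on $\HF_m^{(a,b)}(H^{\#l_i})$ and $\HF_m^{(a+E,b+E)}(H^{\#l_{i+1}})$ using the stability of filtered Floer homology under $K\mapsto K\#F$ with $F=H^{\#(l_{i+1}-l_i)}$ (Section \ref{sec:stab}); it is this comparison between \emph{consecutive} admissible iterates, not a spectral invariant of a fixed class, that localizes the cancelling orbit's action near $\nu_i A_H(x)$ with an error controlled by $E_0(H^{\#r})$ for $r\le\hat l$.

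Relatedly, your treatment of the third bullet (positivity) is a contradiction sketch that does not close: even granting that failure of positivity for all choices of $x$ ``forces'' some $x_j$ to be an SDM, you would need a quantitative mechanism producing a \emph{distinct} orbit, and the spectrum-realizer of $c(H^{\#\nu_i})$ may simply be $x^{\nu_i}$ itself. The paper gets positivity for free from its trichotomy: in Case 1 the killing orbit has action strictly different from $0$; in Case 2 the index gap is at least $\check l\,\Delta_H(x)-2n>0$ because the subsequence $l_i$ is chosen with $l_{i+1}-l_i\ge\check l>2n/\Delta_H(x)$, so that the supports of $\HF_*(H^{\#l_i},x^{l_i})$ and $\HF_*(H^{\#l_{i+1}},x^{l_{i+1}})$ are disjoint; in Case 3 the action of $y$ lies in $(a,a+2E)$ or $(b,b+2E)$, both disjoint from $0$. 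This support-disjointness device and the stability diagram are the two ideas your proposal lacks; without them neither the upper bound $e$ nor the positivity can be established.
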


\begin{Remark}
  Under suitable additional assumptions on $M$ and/or $H$, Theorem
  \ref{thm:main} extends to closed, weakly monotone symplectic
  manifolds. However, this generalization is far less obvious than the
  generalization of Theorem \ref{thm:persist-lf} mentioned in Remark
  \ref{rmk:persist-lf} and it will be discussed elsewhere.  Also note
  that, as simple examples show, the condition that the fixed points
  of $\varphi_H$ are isolated is essential in Theorem~\ref{thm:main}.
\end{Remark}

Theorem \ref{thm:main} can be readily interpreted as a statement about
the behavior of the action and index gaps for the iterations
$H^{\#\nu_i}$. An \emph{action gap} of $H$ is the difference
$|A_H(\gamma_1)-A_H(\gamma_0)|$ for two distinct one-periodic orbits
$\gamma_0$ and $\gamma_1$ of $H$. An \emph{index gap} is defined in a
similar fashion by using the mean index $\Delta_H(\gamma)$ in place of
$A_H(\gamma)$ and an \emph{action--index gap} is the sum
$\Gamma_H(\gamma_1,\gamma_0):=|A_H(\gamma_1)-A_H(\gamma_0)|+
|\Delta_H(\gamma_1)-\Delta_H(\gamma_0)|$.  The connection between the
Conley conjecture and the growth of action gaps under iterations can
be summarized as the fact that if the Conley conjecture failed to
hold, the minimal non-zero action gap would grow linearly
with the order of iteration. For instance, the proofs of various
versions of the Conley conjecture for Hamiltonians with displaceable
support are based on the observation that in this case a certain
positive action gap of $H^{\# k}$ remains bounded from above as
$k\to\infty$; see \cite{FS,Gu,HZ,schwarz,Vi:gen}. (For such
Hamiltonians, the Conley conjecture asserts the existence of simple
periodic points with non-zero action and arbitrarily large period,
provided that $\varphi_H\neq \id$.)  Yet, although Theorem
\ref{thm:main} does ensure that certain action gaps remain bounded, it
does not guarantee that these gaps are non-zero. This difficulty is
overcome once action gaps are replaced by action--index gaps, and hence,
Theorem \ref{thm:main} still implies the Conley conjecture.

\begin{Corollary}[\cite{Gi:conley}]
\labell{cor:conley}

Let $\varphi\colon M\to M$ be a Hamiltonian diffeomorphism of a closed
symplectically aspherical manifold $M$. Assume that the fixed points
of $\varphi$ are isolated.  Then $\varphi$ has simple periodic orbits
of arbitrarily large period.

\end{Corollary}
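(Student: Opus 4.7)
The plan is to argue by contradiction: assume $\varphi_H$ has no simple periodic orbit of period greater than some fixed $N_0$. The goal is to apply Theorem~\ref{thm:main} to a quasi-arithmetic sequence of admissible iterations chosen so that the orbits $y_i$ produced by the theorem are forced to be iterations of fixed points of $\varphi_H$. Let $q_1,\ldots,q_r$ be the orders of all roots of unity appearing among the eigenvalues $\lambda\neq 1$ of $d\varphi_H$ at its (finitely many) fixed points, and set $P=N_0!\cdot q_1\cdots q_r$. For the arithmetic progression $n_j=1+Pj$, $j\in\N$, one has $\lambda^{n_j}=\lambda\neq 1$ for each such $\lambda$, so every $n_j$ is admissible; moreover, $n_j$ is coprime to every $m\in\{2,\ldots,N_0\}$. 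Being arithmetic and admissible, $\{n_j\}$ contains, by the last sentence of Theorem~\ref{thm:main}, a quasi-arithmetic subsequence $\nu_i$ together with $\nu_i$-periodic orbits $y_i$ and a one-periodic orbit $x$ satisfying the three listed conditions.

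Each $y_i$ factors as $y_i=w_i^{k_i}$ for some simple orbit $w_i$ of minimal period $m_i$ with $\nu_i=k_i m_i$; by the contradiction hypothesis $m_i\leq N_0$, and the coprimality $\gcd(\nu_i,m)=1$ for every $m\in\{2,\ldots,N_0\}$ then forces $m_i=1$. Hence each $w_i$ is a fixed point of $\varphi_H$. Since the fixed points of $\varphi_H$ are isolated and $M$ is closed there are only finitely many of them, so a further pigeonhole extraction of a subsequence renders $w_i=w$ constant, and $y_i=w^{\nu_i}$.

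The linear scaling of action and mean index under iteration now gives $A_{H^{\#\nu_i}}(x^{\nu_i})=\nu_i A_H(x)$ and $A_{H^{\#\nu_i}}(y_i)=\nu_i A_H(w)$, so the first bullet of Theorem~\ref{thm:main} becomes $|A_H(x)-A_H(w)|\leq e/\nu_i$; since $\nu_i\to\infty$, this forces $A_H(x)=A_H(w)$, and therefore $A_{H^{\#\nu_i}}(x^{\nu_i})=A_{H^{\#\nu_i}}(y_i)$ for every $i$. The identical argument with the mean index in place of the action yields $\Delta_{H^{\#\nu_i}}(x^{\nu_i})=\Delta_{H^{\#\nu_i}}(y_i)$. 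But then the action--index gap vanishes, contradicting the third bullet of Theorem~\ref{thm:main}. The delicate step in the plan is the arithmetic choice of $\nu_i$: making $\nu_i$ coprime to every potentially occurring short minimal period is what rules out iterations of short simple orbits and reduces $y_i$ to an iterated fixed point. Without this coprimality trick one would need to control possible accumulations of the ratios $A_{H^{\#m}}(z)/m$ near $A_H(x)$ over the (possibly infinite) family of short simple orbits $z$, which the direct limiting argument cannot handle.
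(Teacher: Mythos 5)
Your proof is correct and follows essentially the same route as the paper's: argue by contradiction, choose an arithmetic progression of admissible iterations coprime to every possible short period so that the orbits $y_i$ from Theorem~\ref{thm:main} must be iterations of fixed points, and then use the linear scaling $A_{H^{\#\nu_i}}(z^{\nu_i})=\nu_i A_H(z)$, $\Delta_{H^{\#\nu_i}}(z^{\nu_i})=\nu_i\Delta_H(z)$ to contradict the bounded-but-positive action--index gap. The only cosmetic difference is at the end, where the paper invokes the minimal positive action--index gap $\eps$ among the finitely many fixed points to get $\Gamma(x^{\nu_i},y_i)\geq\nu_i\eps$, while you pigeonhole to a constant fixed point and pass to the limit; the two are equivalent.
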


\begin{proof}
Assume the contrary: $\varphi$ has only finitely many simple periodic
orbits. Let $p_1>1,\ldots, p_l>1$ be the periods of these orbits.  As
above, denote by $q_1>1,\ldots, q_r>1$ the degrees of the roots of
unity among the eigenvalues $\lambda\neq 1$ of $d\varphi$ at the fixed
points of $\varphi$. The integers not divisible by
$p_1,\ldots,p_l,q_1,\ldots,q_r$ are admissible and form a
quasi-arithmetic sequence. Pick a sequence of iterations $\nu_i$
contained in this set such that $0<\Gamma(x^{\nu_i},y_i)<c:=e+\delta$
as in Theorem \ref{thm:main}. By our choice of $\nu_i$, every
$\nu_i$-periodic orbit is necessarily the $\nu_i$th iteration of a
one-periodic orbit and, in particular, $y_i=z_i^{\nu_i}$ for some
one-periodic orbits $z_i$. As a consequence,
$\Gamma(x^{\nu_i},y_i)=\nu_i\Gamma(x,z_i)$ and
$\Gamma(x,z_i)>0$. Denote by $\eps>0$ the minimal positive
action-index gap between one-periodic orbits of $\varphi$. Then
$\Gamma(x,z_i)\geq\eps$ and $\Gamma(x^{\nu_i},y_i)\geq \nu_i\eps>c$,
when $\nu_i$ is large enough. This contradicts Theorem~\ref{thm:main}.
\end{proof}

\begin{Remark}
  Let $\varphi$ be a compactly supported, positive Hamiltonian
  diffeomorphism of $\R^{2n}$, i.e., $\varphi=\varphi_H$, where $H$ is
  compactly supported, $H\geq 0$, and $H\not\equiv 0$. Then the number
  of simple periodic orbits of $\varphi$ with positive action and with
  period less than or equal to $k$ grows at least linearly with $k$,
  \cite{Vi:gen}. Moreover, the same is true for any positive
  Hamiltonian diffeomorphism $\varphi$ of a wide, geometrically
  bounded manifold (e.g., a manifold convex at infinity) whenever
  $\varphi$ has compact displaceable support, \cite{Gu}. To the best
  of the authors' knowledge, no such growth results have been obtained
  yet either without the positivity assumption or for diffeomorphisms
  of closed manifolds.
\end{Remark}

\subsection{Organization of the paper}
In Section \ref{sec:prelim}, we set conventions and notation, recall
the definition and relevant properties of the mean index, and provide
some basic references for the construction of Floer homology. Local
Floer homology is discussed in Section \ref{sec:LFH}.
Theorem \ref{thm:persist-lf} is proved in Section
\ref{sec:persist}. The questions of homological and geometrical
characterization of symplectically degenerate maxima and of vanishing
of the pair-of-pants product in local Floer homology are addressed in
Section \ref{sec:sdm-product}.  Theorem \ref{thm:main} is proved in
Section \ref{sec:pf-main}.  The paper is concluded by a proof of
Proposition \ref{prop:persist-iso}, given in Section
\ref{sec:persist-iso} which is independent of the rest of the paper.

\section{Preliminaries}
\labell{sec:prelim}

In this section, we set notation and conventions used in the paper,
recall relevant facts concerning Floer homology and the mean index,
and provide necessary references for the definitions and proofs.

\subsection{Conventions and notation}
\labell{sec:notations}

Throughout the paper, $(M,\omega)$ denotes a symplectic manifold of
dimension $2n$ or, sometimes, $M$ is just a smooth, $m$-dimensional
manifold.  When $M$ is symplectic, it is always required to be
symplectically aspherical, i.e.,
$\omega\mid_{\pi_2(M)}=0=c_1(TM)\mid_{\pi_2(M)}$, although in some instances
(e.g., Theorem \ref{thm:persist-lf}) this requirement can be
relaxed. All maps and functions considered in this paper are assumed
to be $C^\infty$-smooth and all Hamiltonians $H$ are one-periodic in
time, i.e., $H\colon S^1\times M\to\R$, unless specified otherwise.
We set $H_t = H(t,\cdot)$ for $t\in S^1$. The Hamiltonian vector field
$X_H$ of $H$ is defined by $i_{X_H}\omega=-dH$. The time-dependent
Hamiltonian flow of $H$, i.e., the flow of $X_H$, is denoted by
$\varphi_H^t$.  (By definition, a (time-dependent) flow is just
a family of diffeomorphisms beginning at $\id$.) We refer to the
time-one map $\varphi_H^1=:\varphi_H$ as a Hamiltonian diffeomorphism.
One- or $k$-periodic orbits of $\varphi_H^t$ are in one-to-one
correspondence with fixed points or $k$-periodic points of
$\varphi_H$. In this paper, we are only concerned with contractible
periodic orbits.  \emph{A periodic orbit is always assumed to be
  contractible, even if this is not explicitly stated}.

Let $\gamma\colon S^1\to M$ be a contractible loop. The action of $H$
on $\gamma$ is given by
$$
A_H(\gamma)=-\int_z\omega+\int_{S^1} H_t(\gamma(t))\,dt,
$$
where $z\colon D^2\to M$ is such that $z\mid_{S^1}=\gamma$.  The least
action principle asserts that the critical points of $A_H$ on the
space of all contractible maps $\gamma\colon S^1\to M$ are exactly the
contractible one-periodic orbits of $\varphi_H^t$.

The action spectrum $\CS(H)$ of $H$ is the set of critical values of
$A_H$. This is a zero measure, closed set; see, e.g.,
\cite{HZ,schwarz}. The \emph{index spectrum} of $H$ is defined in a
similar fashion by using the mean index $\Delta_H(\gamma)$ in place of
$A_H(\gamma)$. (The definition and properties of the mean index are
reviewed in Section \ref{sec:mean-index}.) The index spectrum
$\CS_{\Ii}(H)$ is a closed set. However, $\CS_{\Ii}(H)$, in contrast
with $\CS(H)$, need not have zero measure. The \emph{action--index
spectrum} of $H$ is the collection of pairs
$(A_H(\gamma),\Delta_H(\gamma))\in \R^2$ for all contractible
one-periodic orbits $\gamma$ of $H$; cf.\ \cite{CFHW}. This is a
closed, zero measure subset of $\R^2$.  Clearly, a non-zero action
(index) gap introduced in Section \ref{sec:result} is the distance
between two points in $\CS(H)$ (respectively, $\CS_{\Ii}(H)$).

\begin{Definition}
A fixed point $p$ of $\varphi_H$ and the one-periodic orbit
$\gamma(t)=\varphi^t_H(p)$, $t\in [0,\,1]$, are \emph{non-degenerate}
if the linearized return map $d\varphi_H \colon T_{p}M\to T_{p}M$ has
no eigenvalues equal to one. Following \cite{SZ}, we call $p$ and
$\gamma$ \emph{weakly non-degenerate} if at least one of the
eigenvalues is different from one. Otherwise,
$p$ and $\gamma$ are said to be \emph{strongly degenerate}.
\end{Definition}

The \emph{Conley--Zehnder index} of a non-degenerate periodic orbit is
defined in \cite{Sa,SZ}. In this paper, the Conley--Zehnder index
$\MUCZ(H,\gamma)\in\Z$ of an orbit $\gamma$ is set to be the negative
of that in \cite{Sa}. In other words, we normalize $\MUCZ$ so that
$\MUCZ(\gamma,H)=n$ when $\gamma$ is a non-degenerate maximum of an
autonomous Hamiltonian $H$ with small Hessian. More generally, when
$H$ is autonomous and $\gamma$ is a non-degenerate critical point of
$H$ such that the eigenvalues of the Hessian (with respect to a metric
compatible with $\omega$) are less than $2\pi$, the Conley--Zehnder
index of $\gamma$ is equal to one half of the negative signature of
the Hessian. When $H$ is clear from the context, we will use the
notation $\MUCZ(\gamma)$.

Furthermore, recall that $\pi_1(\Sp(2n))\cong\Z$, where $\Sp(2n)$ is
the group of linear symplectic transformations of $\R^{2n}=\C^n$. We
fix this isomorphism by requiring it to be the composition of the
isomorphism $\pi_1(\Sp(2n))\cong\pi_1(\U(n))$ induced by the inclusion
$\U(n)\hookrightarrow \Sp(2n)$ with the isomorphism
$\pi_1(\U(n))\cong\Z$ induced by $\det\colon \U(n)\to S^1$.  The
\emph{Maslov index} of a loop in $\Sp(2n)$ is the class of this loop
in $\pi_1(\Sp(2n))\cong \Z$. As is well known, these definitions carry
over unambiguously to the group of linear symplectic transformations
of any finite--dimensional symplectic vector space.

Let $K$ and $H$ be two one-periodic Hamiltonians. The composition $K\#
H$ is defined by the formula
$$
(K\# H)_t=K_t+H_t\circ(\varphi^t_K)^{-1}.
$$
The flow of $K\# H$ is $\varphi^t_K\circ \varphi^t_H$. In general,
$K\# H$ is not one-periodic. However, this is the case if, for
example, $H_0\equiv 0\equiv H_1$. The latter condition can be met by
reparametrizing the Hamiltonian as a function of time without changing
the time-one map. Thus, in what follows, we will always treat $K\# H$
as a one-periodic Hamiltonian.  (Another instance when the composition
$K\# H$ of two one-periodic Hamiltonians is automatically one-periodic
is when the flow $\varphi^t_K$ is a loop of Hamiltonian
diffeomorphisms, i.e., $\varphi_K^1=\id$.)  We set $H^{\# k}=H\#\ldots
\# H$ ($k$ times). The flow $\varphi^t_{H^{\# k}}=(\varphi_H^t)^k$,
$t\in [0,\,1]$, is homotopic with fixed end-points to the flow
$\varphi^t_H$, $t\in [0,\, k]$.

The $k$th iteration of a one-periodic orbit $\gamma$ of $H$ will be
denoted by $\gamma^k$. More specifically, $\gamma^k(t)=\varphi_{H^{\#
k}}^t(p)$, where $p=\gamma(0)$ and $t\in [0,\,1]$.  Clearly, $A_{H^{\#
k}}(\gamma^k)=kA_H(\gamma)$. Replacing $\varphi^t_{H^{\# k}}$, $t\in
[0,\,1]$, by the homotopic flow $\varphi^t_H$, $t\in [0,\, k]$, we can
think of $\gamma^k$ as the $k$-periodic orbit $\gamma(t)$, $t\in
[0,\,k]$, of $H$.  Hence, there is an action--preserving
one-to-one correspondence between one-periodic orbits of $H^{\# k}$
and $k$-periodic orbits of $H$.

A more detailed treatment of the material discussed in this section
can be found, for instance, in \cite{HZ}.

\subsection{Floer homology}
\labell{sec:floer}

Recall that when $M$ is closed and symplectically aspherical, the
filtered Floer homology of $H\colon S^1\times M\to \R$ for the
interval $(a,\, b)$, denoted throughout the paper by $\HF^{(a,\,
b)}_*(H)$, is defined. We refer the reader to Floer's papers
\cite{F:Morse,F:grad,F:c-l,F:witten} or to, e.g.,
\cite{BPS,HZ,mdsa,Sa,SZ,schwarz} for further references and
introductory accounts of the construction of (Hamiltonian) Floer
homology. Terminology, conventions, and most of the notation used here
are identical to those in \cite{Gi:coiso,Gi:conley,Gu}.

Consider a one-periodic Hamiltonian $G$ generating a loop of
Hamiltonian diffeomorphisms of $M$. Then, as is well known, all orbits
$\gamma(t)=\varphi_G^t(p)$, where $p\in M$ and $t\in S^1$, are
contractible loops. (This follows from the proof of the Arnold
conjecture.)  The action $c=A_G(\gamma)$ is independent of $p$ and the
Maslov index of the linearization $d(\varphi^t_G)_{\gamma(t)}$, with
respect to the trivialization of $TM$ along $\gamma$ associated with a
disk bounded by $\gamma$, is zero.  Furthermore, it is easy to see
that for a suitable choice of almost complex structures, the
composition with the flow of $G$ induces an isomorphism of Floer
complexes of $H$ and $G\# H$ shifting the action filtration by $c$ and
preserving the grading. This isomorphism sends a one-periodic orbit
$\gamma$ of $H$ to the one-periodic orbit
$\Phi_G(\gamma)(t):=\varphi^t_G(\gamma(t))$ of $G\# H$; see, e.g.,
\cite{Gi:conley} for more details.  Hence, we obtain an isomorphism of
Floer homology:
$$
\HF^{(a,\,b)}_*(H)\stackrel{\cong}\longrightarrow\HF^{(a+c,\,b+c)}_*(G\#H).
$$
As a consequence, the filtered Floer homology of $H$ is
determined by $\varphi_H$ up to a shift of the action filtration.

\subsection{The mean index}
\labell{sec:mean-index}

Let $\gamma$ be a one-periodic orbit of a Hamiltonian $H$ on $M$.  (It
suffices to have $H$ defined only on a neighborhood of $\gamma$.) The
mean index $\Delta_H(\gamma)\in\R$ measures the sum of rotations of
the eigenvalues of $d(\varphi^t_H)_{\gamma(t)}$ lying on the unit
circle. Here $d(\varphi^t_H)_{\gamma(t)}$ is interpreted as a path in
the group of linear symplectomorphisms by using, as above, the
trivialization of $TM$ along $\gamma$, associated with a disk bounded
by $\gamma$.  Referring the reader to \cite{SZ} for a precise
definition of $\Delta_H(\gamma)$ and the proofs of its properties, we
just recall here the following facts that are used in this paper.

\begin{itemize}

\item[(MI1)] The iteration formula:
$\Delta_{H^{\# k}}(\gamma^k)=k\Delta_H(\gamma)$.

\item[(MI2)] Continuity: Let $\tH$ be a $C^2$-small perturbation of
$H$ and let $\tgamma$ be a one-periodic orbit of $\tH$ close to
$\gamma$. Then $|\Delta_H(\gamma)-\Delta_{\tH}(\tgamma)|$ is small.

\item[(MI3)] The mean index formula: Assume that $\gamma$ is
non-degenerate. Then, as $k\to\infty$ through admissible iterations,
$\MUCZ(H^{\# k},\gamma^k)/k\to \Delta_H(\gamma)$.

\item[(MI4)] Relation to the Conley--Zehnder index: Let $\gamma$ split
into non-degenerate orbits $\gamma_1,\ldots,\gamma_m$ under a
$C^2$-small, non-degenerate perturbation $\tH$ of $H$. Then
$|\MUCZ(\tH,\gamma_i)-\Delta_H(\gamma)|\leq n$ for all $i=1,\ldots,m$.
Moreover, these inequalities are strict when $\gamma$ is weakly
non-degenerate; see \cite[p.\ 1357]{SZ}.  In particular, if $\gamma$
is non-degenerate, $|\MUCZ(H,\gamma)-\Delta_H(\gamma)|< n$.

\item[(MI5)] Additivity: Let $\gamma_1$ and $\gamma_2$ be one-periodic
orbits of Hamiltonians $H_1$ and $H_2$ on manifolds $M_1$ and,
respectively, $M_2$.  Then
$\Delta_{H_1+H_2}((\gamma_1,\gamma_2))=\Delta_{H_1}(\gamma_1)
+\Delta_{H_2}(\gamma_2)$, where $H_1+H_2$ is the naturally defined
Hamiltonian on $M_1\times M_2$.

\item[(MI6)] Action of global loops: Assume that $G$ generates a loop
of Hamiltonian diffeomorphisms of $M$. Then $\Delta_{G\#
H}(\Phi_G(\gamma))=\Delta_H(\gamma)$.

\item[(MI7)] Action of local loops: Assume that $\gamma(t)\equiv p$ is
a constant one-periodic orbit and that $G$ generates a loop of
Hamiltonian diffeomorphisms fixing $p$ and defined on a neighborhood
of $p$. Then $\Delta_{G\# H}(\Phi_G(\gamma))=\Delta_H(\gamma)+2\mu$,
where $\mu$ is the Maslov index of the loop $d(\varphi^t_G)_p$.

\item[(MI8)] Index of strongly degenerate orbits: Assume that $\gamma$
is strongly degenerate. Then $\Delta_H(\gamma)\in 2\Z$. Moreover, when
$\gamma\equiv p$ is a constant orbit and $H$ is defined on a
neighborhood of $p$ and generates a loop of Hamiltonian
diffeomorphisms, we have $\Delta_H(p)=2\mu$, where $\mu$ is the Maslov
index of the loop $d(\varphi^t_H)_p$.

\end{itemize}

\begin{Remark}
\labell{rmk:maslov}

Regarding (MI6) and (MI7), note that, as has been pointed out above,
the Maslov index of a global loop, in contrast with the index of a
local loop, is automatically zero. This ensures that the correction
term $2\mu$ vanishes in the setting of (MI6).
\end{Remark}

\section{Local Floer homology}
\labell{sec:LFH}

In this section, we briefly recall the definition and basic properties
of local Morse and Floer homology following mainly \cite{Gi:conley},
although these constructions go back to the original work of Floer (see,
e.g., \cite{F:witten,Fl}) and have been revisited a number of times
since then.

\subsection{Local Morse homology}
\labell{sec:LMH}

Let $f\colon M^m\to\R$ be a smooth function on a manifold $M$ and let
$p\in M$ be an isolated critical point of $f$. Fix a small
neighborhood $U$ of $p$ containing no other critical points of $f$ and
consider a small generic perturbation $\tf$ of $f$ in $U$. To be more
precise, $\tf$ is required to be Morse inside $U$ and $C^1$-close to
$f$. Then, as is easy to see, every anti-gradient trajectory
connecting two critical points of $\tf$ in $U$ is entirely contained
in $U$. Moreover, the same is true for broken trajectories. As a
consequence, the vector space (over $\Z_2$) generated by the critical
points of $\tf$ in $U$ is a complex with (Morse) differential defined
in the standard way; see, e.g., \cite{Jo,schwarz:book}. Furthermore,
the continuation argument shows that the homology of this complex,
denoted here by $\HM_*(f,p)$ and referred to as the \emph{local Morse
homology} of $f$ at $p$, is independent of the choice of $\tf$. This
construction is a particular case of the one from, e.g.,
\cite{F:witten}.

\begin{Example}
Assume that $p$ is a non-degenerate critical point of $f$ of index
$k$. Then $\HM_l(f,p)=\Z_2$ when $l=k$ and $\HM_l(f,p)=0$ otherwise.
\end{Example}

\begin{Example}
\labell{exam:Morse-max}
When $p$ is a strict local maximum of $f$, we have
$\HM_m(f,p)=\Z_2$. Indeed, in this case, as is easy to see from the
standard Morse theory,
$$
\HM_m(f,p)= H_m(\{f\geq f(p)-\eps\}, \{f= f(p)-\eps\})=\Z_2,
$$
where $\eps>0$ is assumed to be small and such that $f(p)-\eps$ is a
regular value of $f$. Furthermore, the converse is also true. In fact,
$f$ has (strict) local maximum at $p$ if and only if
$\HM_m(f,p)=\Z_2$; see, e.g., \cite{Gi:conley}.
\end{Example}

We will need the following property of local Morse homology, which can
be easily established by the standard continuation argument; cf.\
\cite{schwarz:book}.

\begin{itemize}
\item Let $f_s$, $s\in [0,\, 1]$, be a family of smooth functions with
\emph{uniformly isolated} critical point $p$, i.e., $p$ is the only
critical point of $f_s$, for all $s$, in some neighborhood of
$p$. Then $\HM_*(f_s,p)$ is constant throughout the family, i.e.,
$\HM_*(f_0,p)=\HM_*(f_1,p)$.
\end{itemize}

\begin{Remark}
  In this observation, the assumption that $p$ is uniformly isolated is
  essential and cannot be replaced by the weaker condition that $p$ is
  just an isolated critical point of $f_s$ for all $s$. Example:
  $f_s(x)=s x^2+ (1-s)x^3$ on $\R$ with $p=0$. (The authors are
  grateful to Doris Hein for this remark.)
\end{Remark}

\subsection{Local Floer homology: the definition and basic properties}
\labell{sec:LFH2}

Let $\gamma$ be an isolated one-periodic orbit of a Hamiltonian
$H\colon S^1\times M\to \R$. Pick a sufficiently small tubular
neighborhood $U$ of $\gamma$ and consider a non-degenerate $C^2$-small
perturbation $\tH$ of $H$ supported in $U$.  Every (anti-gradient)
Floer trajectory $u$ connecting two one-periodic orbits of $\tH$ lying
in $U$ is also contained in $U$, provided that $\|\tH-H\|_{C^2}$ and
$\supp(\tH-H)$ are small enough. (This readily follows from the
analysis carried out in, e.g., \cite{FHS,Sa:london,Sa}.)  Thus, by the
compactness and gluing theorems, every broken anti-gradient trajectory
connecting two such orbits also lies entirely in $U$. Hence, similarly
to the definition of local Morse homology, the vector space (over
$\Z_2$) generated by one-periodic orbits of $\tH$ in $U$ is a complex
with (Floer) differential defined in the standard way. The
continuation argument (see, e.g., \cite{SZ}) shows that the homology
of this complex is independent of the choice of $\tH$ and of the
almost complex structure. We refer to the resulting homology group
$\HF_*(H,\gamma)$ as the \emph{local Floer homology} of $H$ at
$\gamma$. The definition of local Floer homology and most of its
properties discussed below extend with natural modifications
to all symplectic manifolds, once the orbit $\gamma$ is equipped with
a capping; cf.\ \cite[Section 6.3.1]{GG}.

Homology groups of this type were first considered (in a more general
setting) by Floer in \cite{F:witten,Fl}; see also \cite[Section
3.3.4]{Poz}. Local Floer and Morse homology groups are analogues of
(non-equivariant) critical modules introduced in \cite{GM1,GM2}; see
also, e.g., \cite{Lo} for further references.

\begin{Example}
Assume that $\gamma$ is non-degenerate and $\MUCZ(\gamma)=k$.  Then
$\HF_l(H,\gamma)=\Z_2$ when $l=k$ and $\HF_l(H,\gamma)=0$ otherwise.
\end{Example}

In the rest of this section, we list the basic properties of local
Floer homology that are essential for what follows.

\begin{enumerate}
\item[(LF1)] Let $H^s$, $s\in [0,\, 1]$, be a family of Hamiltonians
  such that $\gamma$ is a \emph{uniformly isolated} one-periodic orbit
  for all $H^s$, i.e., $\gamma$ is the only periodic orbit of $H_s$,
  for all $s$, in some open set independent of $s$. Then
  $\HF_*(H^s,\gamma)$ is constant throughout the family:
  $\HF_*(H^0,\gamma)=\HF_*(H^1,\gamma)$.
\end{enumerate}

The proof of this fact is a straightforward application of the
continuation argument; see, e.g., \cite{Gi:conley}. As in the case of
local Morse homology, the condition that $\gamma$ is uniformly isolated
is essential.

Local Floer homology spaces are building blocks for filtered Floer
homology. Namely, essentially by definition, we have the following:

\begin{enumerate}
\item[(LF2)] Assume that $M$ is closed and let $c\in \R$ be such that
all one-periodic orbits $\gamma_i$ of $H$ with action $c$ are
isolated. (As a consequence, there are only finitely many orbits with
action close to $c$.) Then, if $\eps>0$ is small enough,
$$
\HF_*^{(c-\eps,\,c+\eps)}(H)=\bigoplus_i \HF_*(H,\gamma_i).
$$
In particular, if all one-periodic orbits $\gamma$ of $H$ are isolated
and $\HF_k(H,\gamma)=0$ for some $k$ and all $\gamma$, we have
$\HF_k(H)=0$ by the long exact sequence of filtered Floer homology.
\end{enumerate}

The local Floer homology is completely determined by the time-one map
generated by $H$:

\begin{enumerate}
\item[(LF3)] Let $\varphi^t_G$ be a loop of Hamiltonian
diffeomorphisms of $M$. Then
$$
\HF_*(G\# H,\Phi_G(\gamma)) = \HF_*(H,\gamma)
$$
for every isolated one-periodic orbit $\gamma$ of $H$.
\end{enumerate}

Hence, we will sometimes use the notation $\HF_*(\varphi,p)$ for
$\HF_*(H,\gamma)$, where $\varphi=\varphi_H^1$ and $p=\gamma(0)$; cf.\
Section \ref{sec:floer}.

Furthermore, the K\"unneth formula holds for local Floer homology:

\begin{enumerate}
\item[(LF4)] Let $\gamma_1$ and $\gamma_2$ be one-periodic orbits of
Hamiltonians $H_1$ and $H_2$ on, respectively, symplectic manifolds
$M_1$ and $M_2$. Then $\HF_*(H_1+H_2, (\gamma_1,\gamma_2))
=\HF_*(H_1,\gamma_1)\otimes \HF_*(H_2,\gamma_2)$, where $H_1+H_2$ is
the naturally defined Hamiltonian on $M_1\times M_2$.
\end{enumerate}
The proof of (LH4) is identical to the proof of the K\"unneth formula
for Floer homology.

By definition, the \emph{support} of $\HF_*(H,\gamma)$ is the
collection of integers $k$ such that $\HF_k(H,\gamma)\neq 0$. Clearly,
the group $\HF_*(H,\gamma)$ is finitely generated and hence supported
in a finite range of degrees. The next observation, providing more
precise information on the support of $\HF_*(H,\gamma)$, is an
immediate consequence of (MI4).

\begin{enumerate}
\item[(LF5)] The group $\HF_*(H,\gamma)$ is supported in the range
$[\Delta_H(\gamma)-n,\, \Delta_H(\gamma)+n]$. Moreover, when $\gamma$
is weakly non-degenerate, the support is contained in the open
interval $(\Delta_H(\gamma)-n,\, \Delta_H(\gamma)+n)$.
\end{enumerate}

As is clear from the definition of local Floer homology, $H$ need not
be a function on the entire manifold $M$ -- it is sufficient to
consider Hamiltonians defined only on a neighborhood of $\gamma$. For
the sake of simplicity, we focus on the particular case, relevant
here, where $\gamma(t)\equiv p$ is a constant orbit, and hence
$dH_t(p)=0$ for all $t\in S^1$. Then (LH1), (LH4) and (LF5) still
hold, and (LF3) takes the following form:

\begin{enumerate}
\item[(LF6)] Let $\varphi^t_G$ be a loop of Hamiltonian
diffeomorphisms defined on a neighborhood of $p$ and fixing $p$. Then
$$
\HF_*(G\# H,p) = \HF_{*+2\mu}(H,p),
$$
where $\mu$ is the Maslov index of the loop
$t\mapsto d(\varphi^t_G)_p\in \Sp(T_pM)$.
\end{enumerate}

Note that in (LF3), in contrast with (LH6), we \emph{a priori} have
$\mu=0$; cf.\ Remark \ref{rmk:maslov}. Hence, the shift of degree
does not occur when $\varphi^t_G$ is a global loop.  In other words,
comparing (LH3) and (LH6), we can say that while the group
$\HF_*(H,\gamma)$ is completely determined by $\varphi_H\colon M\to M$
and $p=\gamma(0)$, the germ of $\varphi_H$ at $p$ determines
$\HF_*(H,p)$ only up to a shift of degree. The degree depends on the
class of $\varphi_H^t$ in the universal covering of the group of germs
of Hamiltonian diffeomorphisms.

\subsection{Calculation of local Floer homology via local Morse homology}
\labell{sec:morse-floer}

A fundamental property of Floer homology is that $\HF_*(H)$ is equal
to the Morse homology of $H$ when $H$ is autonomous and $C^2$-small;
see \cite{FHS,SZ}. A similar identification holds for local Floer
homology.  In what follows, we will need a slightly more general
version of this fact, where the Hamiltonian is, in a certain sense,
``nearly'' autonomous.

\begin{Lemma}[\cite{Gi:conley}]
\labell{lemma:LFH-LMH}

Let $F$ be a smooth function and let $K$ be a one-periodic
Hamiltonian, both defined on a neighborhood of a point $p$. Assume
that $p$ is an isolated critical point of $F$ and the following
conditions are satisfied:
\begin{itemize}

\item The inequalities $\| X_{K_t}-X_F\|\leq\eps \| X_F\|$ and
$\|\dot{X}_{K_t} \|\leq\eps \| X_F\|$ hold point-wise near $p$ for all
$t\in S^1$. (The dot stands for the derivative with respect to time.)

\item The Hessians $d^2 (K_t)_p$ and $d^2 F_p$ and the constant
$\eps\geq 0$ are sufficiently small. Namely, $\eps<1$ and
$$
\eps(1-\eps)^{-1}+\max_{t}\|d^2 (K_t)_p\|
<2\pi\text{ and }  \eps(1-\eps)^{-1}+\|d^2 F_p\|< 2\pi.
$$
\end{itemize}
Then $p$ is an isolated one-periodic orbit of $K$ and
$\HF_*(K,p)=
\HM_{*+n}(F,p)$.
\end{Lemma}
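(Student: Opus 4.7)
The plan is to reduce the computation of $\HF_*(K,p)$ to that of the autonomous Hamiltonian $F$ via a continuation argument, and then to invoke the classical identification of local Floer and local Morse homology for $C^2$-small autonomous Hamiltonians on a neighborhood of an isolated critical point.

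First I would verify that the constant loop $\gamma(t)\equiv p$ is an isolated one-periodic orbit of $K$. The bound $\|X_{K_t}-X_F\|\leq\eps\|X_F\|$ combined with $X_F(p)=0$ shows that $X_{K_t}(p)=0$ for all $t$, so $\gamma$ is a one-periodic orbit. For isolation, suppose $\tilde\gamma(t)=p+y(t)$ were a second one-periodic orbit of $K$ in a small ball about $p$. Taylor expansion of $X_{K_t}$ at $p$ combined with the two smallness hypotheses yields an equation of the form $\dot y=A_t(y)\,y$ where the operator norm of the linearization $A_t(0)=J\,d^2(K_t)_p$ is bounded above by $\|d^2(K_t)_p\|+\eps(1-\eps)^{-1}$, uniformly in $t$. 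By hypothesis this bound is strictly less than $2\pi$, which excludes any non-trivial closed solution of period one. Hence $y\equiv 0$ and $p$ is isolated.

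Next I would connect $K$ to $F$ by the linear interpolation at the level of vector fields, $X_{K^s_t}=(1-s)X_{K_t}+sX_F$ for $s\in[0,1]$, which corresponds to a smooth family of Hamiltonians $K^s$ (here, as in the lemma, it is enough to work with germs near $p$, extending by a cutoff outside a fixed neighborhood of $p$). All three hypotheses persist along the homotopy with the same constant $\eps$:
\[
\|X_{K^s_t}-X_F\|\leq(1-s)\eps\|X_F\|,\qquad \|\dot X_{K^s_t}\|\leq(1-s)\eps\|X_F\|,
\]
and $\|d^2(K^s_t)_p\|\leq\max\bigl\{\|d^2(K_t)_p\|,\|d^2 F_p\|\bigr\}$. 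The isolation argument of the previous paragraph therefore applies uniformly in $s$, so $p$ is a uniformly isolated one-periodic orbit of $K^s$ throughout the family. Property (LF1) now gives $\HF_*(K,p)=\HF_*(F,p)$.

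Finally, for the autonomous $F$ with $\|d^2 F_p\|<2\pi$, the classical theorem of Floer--Hofer--Salamon (see \cite{FHS,SZ}) shows that every Floer trajectory with both ends in a sufficiently small neighborhood of $p$ is time-independent, hence coincides with an anti-gradient trajectory of $F$. With the Conley--Zehnder normalization fixed in Section~\ref{sec:notations}, a non-degenerate critical point of Morse index $k$ acquires index $k-n$, so the identification of complexes carries a degree shift by $n$ and yields $\HF_*(F,p)=\HM_{*+n}(F,p)$. Combining the three steps proves the lemma. The main technical obstacle is the uniform-isolation argument in the continuation step: the bound $\eps(1-\eps)^{-1}+\|d^2(K_t)_p\|<2\pi$ is precisely what prevents the linearized time-one map along the homotopy from completing a full rotation and thereby producing spurious periodic orbits of $K^s$ near $p$; without this quantitative control the continuation property (LF1) could not be applied.
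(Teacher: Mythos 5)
Your proposal follows exactly the route the paper takes: a linear homotopy from $K$ to the autonomous Hamiltonian $F$, a Yorke-type period estimate (using the quantitative $2\pi$ bounds) to show $p$ is a uniformly isolated one-periodic orbit along the homotopy, invariance (LF1) to get $\HF_*(K,p)=\HF_*(F,p)$, and the standard Floer--Hofer--Salamon identification $\HF_*(F,p)=\HM_{*+n}(F,p)$ for the $C^2$-small autonomous $F$. The paper itself only sketches these steps and defers the detailed isolation estimate to \cite{Gi:conley}, so your write-up is, if anything, more explicit than the source.
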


\begin{Example}
\labell{ex:LFH-LMH}
Assume that $p$ is an isolated critical point of an autonomous
Hamiltonian $F$ and $ \|d^2 F_p\|<2\pi$. Then
$\HF_*(F,p)=\HM_{\ast+n}(F,p)$.
\end{Example}

To prove Lemma \ref{lemma:LFH-LMH}, one first shows that $p$ is a
uniformly isolated one-periodic orbit for all Hamiltonians from a
linear homotopy connecting $K$ and $F$. Thus,
$\HF_*(K,p)=\HF_{*}(F,p)$ by (LF1). Furthermore, $F$ is $C^2$-small
near $p$, and thus, by the standard argument (see, e.g.,
\cite{FHS,SZ}), $\HF_*(F,p)=\HM_{*+n}(F,p)$. We refer the reader to
\cite{Gi:conley} for a detailed argument.

\begin{Remark}
The requirement in Lemma \ref{lemma:LFH-LMH} that $K$ is
close to an autonomous Hamiltonian $F$ plays a crucial role in
the proof of the Conley conjecture, \cite{Gi:conley,Hi}. To the best
of the authors' knowledge, this requirement is originally introduced
in \cite[Lemma 4]{Hi}.
\end{Remark}

\section{Persistence of local Floer homology}
\labell{sec:persist}

The main objective of this section is to prove Theorem
\ref{thm:persist-lf}. Since the question is local, we may assume
without loss of generality that $M=\R^{2n}$ and $\gamma\equiv p=0$ is
a constant one-periodic orbit of a germ of a Hamiltonian $H$.  Indeed,
it is easy to show that the path $\varphi^t_H$, $t\in [0,\,1]$, is
homotopic with fixed end-points to a path $\varphi_{\tH}^t$ such that
$\varphi^t_{\tH}(p)=p$ for all $t$; see \cite[Sections 2.3 and
5.1]{Gi:conley}. (The argument goes through for a general, not
necessarily symplectically aspherical, manifold.) Then $H$ and $\tH$
have isomorphic graded local Floer homology groups at $p$, and we can
just restrict $\tH$ to a neighborhood of $p$ and use the Darboux
theorem. Note also that $p$ is a critical point of ${\tH}_t$ for all
$t$.  From now on, we revert to the notation $H$ for the Hamiltonian
generating the flow near $p$ and set $\varphi=\varphi_H$. The fact
that $\gamma^k\equiv p$ is isolated follows from
Proposition~\ref{prop:persist-iso}.

The proof of Theorem \ref{thm:persist-lf} rests on two building
blocks. These are the (nearly obvious) case where the fixed point is
non-degenerate and the much less trivial case of a strongly degenerate
fixed point. Then the K\"unneth formula implies that the theorem also
holds for a split map, i.e., a Hamiltonian diffeomorphism that can be
decomposed as the direct product of non-degenerate and strongly
degenerate ones.  Finally, the general case is established by showing
that $\varphi$ can be deformed to a split map in the class of
Hamiltonian diffeomorphisms with isolated fixed point at the origin.
The ``moreover'' part of the theorem asserting that $p$ is strongly
degenerate and all shifts $s_k$ are equal to zero if $\Delta_H(p)=0$
and $\HF_n(H,p)\neq 0$ is proved in Section \ref{sec:pf-lf-degen}.

Also note that the fact that $s_k/k\to \Delta_H(\gamma)$ is clear once
\eqref{eq:case2} has been established.  Indeed, pick $l$ such that
$\HF_l(H,\gamma)\neq 0$. Then $\HF_{l_k}(H^{\# k},\gamma^k)\neq 0$,
where $l_k=l+s_k$ by \eqref{eq:case2}.  Furthermore,
$|l_k-\Delta_{H^{\# k}}(\gamma^k)|\leq n$ and $\Delta_{H^{\#
k}}(\gamma^k)=k\Delta_H(\gamma)$ by (LF5) and the iteration formula
(MI1). To summarize, $|s_k+l-k\Delta_H(\gamma)|\leq n$. Dividing this
inequality by $k$, we see that $s_k/k\to \Delta_H(\gamma)$. Moreover,
$|s_k/k-\Delta_H(\gamma)|\leq (n+l)/k$, where
$|l-\Delta_H(\gamma)|\leq n$.

\subsection{Particular case 1: $p$ is non-degenerate}
In this case, the assertion is obvious. Namely, $p$ is a
non-degenerate fixed point of $\varphi^k$ for every admissible $k$,
and hence the Conley-Zehnder index $\mu_k$ of $\varphi^k$ at $p$ is
defined. Clearly,
$$
\HF_l(\varphi^k,p)=\left\{
\begin{aligned}
\Z_2 &\quad\text{if } l=\mu_k,\\
0 &\quad\text{otherwise,}
\end{aligned}
\right.
$$
and the shifts $s_k=\mu_k-\mu_1$ are even when $k$ is good; see \cite{SZ}.

\subsection{Generating functions}
Before turning to the next particular case -- that of a strongly
degenerate fixed point -- we recall in this section a few
well-known facts concerning generating functions, which are utilized
in Section \ref{sec:pf-lf-degen}. The material reviewed here is
absolutely standard -- it goes back to Poincar\'e -- and we refer the
reader to \cite[Appendix 9]{Ar} and \cite{We71,We77} for more details.

Let us identify $\R^{2n}$ with the Lagrangian diagonal $\Delta\subset
\R^{2n}\times \bar{\R}^{2n}$ via the projection $\pi$ to the first
factor, where $\R^{2n}\times \bar{\R}^{2n}$ is equipped with the
symplectic structure $\omega\oplus -\omega$, and fix a Lagrangian
complement $N$ to $\Delta$.  Thus, $\R^{2n}\times \bar{\R}^{2n}$ can
now be treated as~$T^*\Delta$.

Let $\varphi$ be a Hamiltonian diffeomorphism defined on a
neighborhood of the origin $p$ in $\R^{2n}$ and such that
$\|\varphi-\id\|_{C^1}$ is sufficiently small. Then the graph $\Gamma$
of $\varphi$ is $C^1$-close to $\Delta$, and hence $\Gamma$ can be
viewed as the graph in $T^*\Delta$ of an exact form $dF$ near
$p\in\Delta=\R^{2n}$.  The function $F$, normalized by $F(p)=0$ and
called the \emph{generating function} of $\varphi$, has the following
properties:
\begin{enumerate}
\item[(GF1)] $p$ is an isolated critical point of $F$ if and only if
$p$ is an isolated fixed point of $\varphi$,

\item[(GF2)] $\|F\|_{C^2}=O(\|\varphi-\id\|_{C^1})$ and
$\| d^2 F_p\|=O(\|d\varphi_p-I\|)$.
\end{enumerate}

The function $F$ depends on the choice of the Lagrangian complement
$N$ to $\Delta$. To be specific, we take, as $N$, the linear subspace
$N_0$ of vectors of the form $((x,0),(0,y))$ in $\R^{2n}\times
\bar{\R}^{2n}$, where $x=(x_1,\ldots, x_n)$ and $y=(y_1,\ldots,y_n)$
are the standard canonical coordinates on $\R^{2n}$, i.e.,
$\omega=\sum dy_i\wedge dx_i$.  Set $z=(x,y)$ and
$$
\psi(z):=(\text{$x$-component of $\varphi(z)$},y).
$$
Then, as is easy to see, $F$ is determined by the equation
$$
\varphi(z)-z=X_F(\psi(z)),
$$
where $X_F$ is the Hamiltonian vector field of $F$. Note also that
$N_0$, and hence $F$, are uniquely determined by the decomposition of
$\R^{2n}$ into the direct sum of two Lagrangian subspaces -- the
subspace spanned by $x$-coordinates and the one spanned by
$y$-coordinates. Therefore, fixing two transverse Lagrangian subspaces
in $\R^{2n}$ gives rise to a generating function of $\varphi$.

The only reason that above we assumed $\varphi$ to be $C^1$-close to
$\id$ is to make $N$ independent of $\varphi$, and hence make the
construction of $F$ to some extent canonical. This assumption can be
dropped once more flexibility in the choice of $N$ is allowed.
Namely, as is easy to see, for any germ $\varphi$ there exists a
Lagrangian complement $N$ to $\Delta$, transverse to the graph of
$\varphi$.  Then $\varphi$ is given by a generating function with
respect to $N$. Conversely, once $N$ is fixed, every function $F$ is
the generating function of some Hamiltonian diffeomorphism, provided
that the graph of $dF$ in $\R^{2n}\times \bar{\R}^{2n}=\Delta\times
N=T^*\Delta$ is transverse to the fibers of the projection $\pi\colon
\R^{2n}\times \bar{\R}^{2n}\to \R^{2n}$.

\begin{Remark}
\labell{rmk:loc-ham}
From these observations, we recover the well-known fact, used in
Section \ref{sec:split}, that the germ of any symplectomorphism
$\varphi$ near a fixed point is Hamiltonian. Indeed, let $F$ be the
generating function of $\varphi$ with respect to some $N$. Set
$F_s=(1-s)F+ sd^2F_p$ with $s\in [0,\,1]$.  Since $d^2 (F_s)_p=d^2
F_p$, the graph of $dF_s$ is transverse to the fibers of $\pi$ for all
$s$, and we obtain a family $\varphi_s$ of symplectic maps fixing $p$
and connecting $\varphi$ to the linear symplectic map $d\varphi$.  As
a consequence, the germ $\varphi$ lies in the connected component of
the identity, and thus, by the standard, elementary argument,
$\varphi$ is Hamiltonian.
\end{Remark}

\subsection{Particular case 2: $p$ is strongly degenerate}
\labell{sec:pf-lf-degen}

Since, by definition, all eigenvalues of $d\varphi_p$ are equal to
one, every $k>0$ is admissible and good.  Furthermore, as is easy to check, by
a linear symplectic change of coordinates one can make $d\varphi_p$
arbitrarily close to identity; see \cite[Section 5.2.1]{Gi:conley}.
Hence, we may assume without loss of generality that the iterations
$\varphi^k$ for all $k$ in an arbitrarily large, but fixed, range
are $C^1$-close to $\id$ in a sufficiently small neighborhood of
$p$. As a consequence, $\varphi^k$ is given by a generating function
$F_k$ with respect to $N_0$, which is uniquely determined by the
equation
$$
\varphi^k(z)-z=X_{F_k}(\psi_k(z)),\quad F_k(p)=0,
$$
where, as above,
$$
\psi_k(z):=(\text{$x$-component of $\varphi^k(z)$},y).
$$
Set $F=F_1$ and $G_t=tF_k+(1-t)kF$, where $t\in [0,\,1]$.

\begin{Claim}
The origin $p$ is a uniformly isolated critical point of the family $G_t$,
$t\in [0,\,1]$.
\end{Claim}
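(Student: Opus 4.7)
The plan is to argue by contradiction. Suppose $p$ were not uniformly isolated; then there would exist sequences $t_n\in [0,1]$ and $w_n\to p$ with $w_n\neq p$ and $dG_{t_n}(w_n)=0$. Since the Hamiltonian-vector-field assignment is linear and $X_G$ vanishes precisely where $dG$ does, this translates into
\begin{equation*}
t_n X_{F_k}(w_n) + (1-t_n)k\, X_F(w_n) = 0.
\end{equation*}
Using the generating-function equations $X_{F_k}(\psi_k(z)) = \varphi^k(z)-z$ and $X_F(\psi(z)) = \varphi(z)-z$, I would write $w_n = \psi_k(z_n) = \psi(z'_n)$, rewriting the identity as
\begin{equation*}
t_n\bigl(\varphi^k(z_n)-z_n\bigr) + (1-t_n)k\bigl(\varphi(z'_n)-z'_n\bigr) = 0.
\end{equation*}

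The next step is to exploit strong degeneracy. Put $V(z) := \varphi(z)-z$; since $d\varphi_p = I$ we have $V(p)=0$ and $dV_p = 0$, so $V(z) = O(|z-p|^2)$ and $dV(z) = O(|z-p|)$ on a small neighborhood of $p$. A straightforward induction (writing $\varphi^{j+1}(z) = \varphi^j(z) + V(\varphi^j(z))$ and Taylor expanding $V$) yields
\begin{equation*}
\varphi^k(z) - z = kV(z) + O(|z-p|^3).
\end{equation*}
Moreover, from the explicit formula $\psi(z) = (x\text{-component of }\varphi(z), y)$ and its analogue for $\psi_k$, the maps $\psi - \id$ and $\psi_k - \id$ are themselves of order $O(|z-p|^2)$. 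Consequently $|z_n - w_n|$ and $|z'_n - w_n|$ are $O(|w_n-p|^2)$, and hence $V(z'_n) = V(z_n) + O(|z_n-p|^3)$.

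Substituting these estimates, the displayed equation collapses to
\begin{equation*}
kV(z_n)\bigl(1 + O(|z_n-p|)\bigr) = 0.
\end{equation*}
For $n$ large the scalar factor is nonzero, forcing $V(z_n) = 0$, i.e., $z_n$ is a fixed point of $\varphi$. But $\psi_k(p) = p$ combined with $w_n \neq p$ gives $z_n \neq p$, while $z_n \to p$; this contradicts the fact that $p$ is an isolated fixed point of $\varphi$.

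The main obstacle I anticipate is keeping the two distinct reparametrizations $\psi$ and $\psi_k$ from spoiling the comparison of $\varphi^k(z)-z$ with $k V(z)$. What makes the argument work is precisely strong degeneracy: all the relevant correction terms --- the difference $\varphi^k(z)-z-kV(z)$, the displacement of $\psi$ and $\psi_k$ from the identity, and the variation of $V$ over distances of size $|z-p|^2$ --- are of strictly higher order than $V(z)$ itself, so they can be absorbed into a single multiplicative factor $1+O(|z-p|)$. Without the hypothesis that all eigenvalues of $d\varphi_p$ equal one, these error terms would be of the same order as $V(z)$ and the contradiction would not follow, which is why this case must be treated in isolation from the general one.
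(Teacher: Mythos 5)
There is a genuine gap, in fact two. First, strong degeneracy of $p$ means only that all eigenvalues of $d\varphi_p$ equal one, i.e., that $d\varphi_p$ is unipotent --- not that $d\varphi_p=I$. The paper's reduction (via a linear symplectic conjugation) makes $d\varphi_p$ arbitrarily \emph{close} to the identity, but it cannot make it equal to the identity when there are nontrivial Jordan blocks. Your entire order-of-vanishing bookkeeping --- $V(z)=O(|z-p|^2)$, $\psi-\id=O(|z-p|^2)$, $\varphi^k(z)-z=kV(z)+O(|z-p|^3)$ --- rests on $dV_p=d\varphi_p-I=0$, so it collapses in exactly the case this claim is meant to handle.

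Second, and more fundamentally, even granting $d\varphi_p=I$, the final step does not follow: to rewrite $kV(z_n)+O(|z_n-p|^3)=0$ as $kV(z_n)\bigl(1+O(|z_n-p|)\bigr)=0$ you need the lower bound $|V(z_n)|\gtrsim |z_n-p|^2$, and no such bound holds --- $V=\varphi-\id$ can vanish to arbitrarily high (even infinite) order at an isolated fixed point, in which case the absolute error $O(|z_n-p|^3)$ dominates $V(z_n)$ and you cannot conclude $V(z_n)=0$. This is precisely the difficulty the paper's proof is engineered to avoid: all of its error terms are bounded \emph{multiplicatively relative to the main term}, in the form $O(\|\varphi-\id\|_{C^1})\,\|X_F(\psi(z))\|$, via the inductive telescoping estimate for $X_{F_k}(\psi_k(z))-kX_F(\psi(z))$, with the mean value theorem supplying the factor $\|X_F\|_{C^1}=O(\|\varphi-\id\|_{C^1})$ each time one compares values of $X_F$ at successive points of the orbit. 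The small parameter is $\|\varphi-\id\|_{C^1}$ on a fixed small ball (made small by the linear conjugation), not a power of $|z-p|$. If you replace your absolute Taylor estimates by such relative ones, your argument essentially becomes the paper's; as written, the contradiction is not reached.
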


Assuming the claim, let us proceed with the proof. First recall that,
starting with $F_k$, one can construct near $p$ a one-periodic
Hamiltonian $K^t_k$ with time-one map $\varphi^k$, satisfying the
hypotheses of Lemma \ref{lemma:LFH-LMH}; see \cite{Gi:conley,Hi}. Then
the local Floer homology of $K_k$ at $p$ is equal to the local Morse
homology of $F_k$ at $p$ up to a shift of degree by $n$:
$$
\HF_*(K_k,p)=\HM_{*+n}(F_k,p).
$$
The Hamiltonians $H^{\# k}$ and $K_k$ generate the same time-one map near
$p$. Thus,
$$
\HF_*(K_k,p)=\HF_{*+m_k}(H^{\# k},p),
$$
by (LF6), for some even shift of degree $m_k$.  From the claim and
homotopy invariance of local Morse homology (see (LF1)), we infer that
$$
\HM_{*}(F_k,p)=\HM_{*}(kF,p)=\HM_{*}(F,p).
$$
Hence,
$$
\HF_{*+m_k}(H^{\# k},p)=\HM_{*+n}(F_k,p)=\HM_{*+n}(F,p)=\HF_{*+m_1}(H,p),
$$
and thus \eqref{eq:case2} holds with $s_k=m_1-m_k$.
Since all $m_k$ are even, the shifts $s_k$ are also even.

Now we are in a position to prove the ``moreover'' part of the
theorem.  The fact that $p$ is strongly degenerate if $\Delta_H(p)=0$
and $\HF_n(H,p)\neq 0$ follows immediately from (MI4) or (LF5).  It
remains to show that $s_k=0$ for all $k$. By (MI7),
$$
m_k=\Delta_{K_k}(p)-\Delta_{H^{\# k}}(p).
$$
Here $\Delta_{H^{\# k}}(p)=k\Delta_H(p)=0$ and $|\Delta_{K_k}(p)|$ is
small since $d(\varphi^t_{K_k})_p$ is close to the identity. (To be
more precise, for a fixed $k$, the path of linear maps
$d(\varphi^t_{K_k})_p$ can be made arbitrarily close to the identity
by a symplectic conjugation.)  We conclude that $m_k=0$, since $m_k$
is an integer, and hence $s_k=0$.

\begin{proof}[Proof of the claim]
First, let us show that
\begin{equation}
\labell{eq:Xk}
\| X_{F_k}(\psi_k(z))-kX_{F}(\psi(z))\|=O(\|\varphi-\id\|_{C^1})
\|X_{F}(\psi(z))\|,
\end{equation}
where $\psi=\psi_1$ and $\|\cdot\|_{C^1}$ stands for the $C^1$-norm on
a sufficiently small ball centered at the origin. Then, as a
consequence of \eqref{eq:Xk}, we have
\begin{equation}
\labell{eq:Xk2}
\| X_{F_k}(\psi_k(z))\|=\big(k+O(\|\varphi-\id\|_{C^1})\big)
\|X_{F}(\psi(z))\|.
\end{equation}

To prove \eqref{eq:Xk}, we argue inductively. When $k=1$, the left
hand side is zero and the assertion is obvious. Assume that
\eqref{eq:Xk}, and hence \eqref{eq:Xk2}, have been established for all
iterations of order up to and including $k-1$. Then
\begin{equation*}
\begin{split}
X_{F_k}(\psi_k(z)) &= \varphi^k(z)-z\\
&= \big(\varphi^k(z)-\varphi^{k-1}(z)\big)+\ldots+\big(\varphi(z)-z\big)\\
&= X_{F}(\psi\varphi^{k-1}(z))+\ldots+ X_{F}(\psi(z)),
\end{split}
\end{equation*}
and therefore
\begin{equation}
\labell{eq:sum}
\begin{split}
X_{F_k}(\psi_k(z))-kX_{F}(\psi(z))
& =
X_{F}(\psi\varphi^{k-1}(z))-X_{F}(\psi(z))\\
&\quad +\ldots\\
&\quad + X_{F}(\psi\varphi(z))-X_{F}(\psi(z)).
\end{split}
\end{equation}
Furthermore, for every $l$ in the range from 1 to $k-1$, we have
\begin{equation*}
\begin{split}
\|X_{F}(\psi\varphi^{l}(z))-X_{F}(\psi(z))\|
&\leq \|X_F\|_{C^1}\cdot\|\psi\varphi^{l}(z)-\psi(z)\|\\
&\leq \|X_F\|_{C^1}\cdot\|\psi\|_{C^1}\cdot\|\varphi^{l}(z)-z\|\\
&= \|X_F\|_{C^1}\cdot\|\psi\|_{C^1}\cdot\|X_{F_l}(\psi_l(z))\|.
\end{split}
\end{equation*}
It is clear that
$\|X_F\|_{C^1}=O(\|F\|_{C^2})=O(\|\varphi-\id\|_{C^1})$ by (GF2) and
$\|\psi\|_{C^1}\leq\const$. Finally, by the induction hypothesis,
$$
\|X_{F_l}(\psi_l(z))\|\leq \big(l+O(\|\varphi-\id\|_{C^1})\big)
\|X_{F}(\psi(z))\|.
$$
As a consequence,
$$
\|X_{F}(\psi\varphi^{l}(z))-X_{F}(\psi(z))\|\leq
O(\|\varphi-\id\|_{C^1})\|X_{F}(\psi(z))\|
$$
for all $l=1,\ldots,k-1$. Adding up these upper bounds for
$l=1,\ldots,k-1$ and using \eqref{eq:sum}, we obtain \eqref{eq:Xk}.

Continuing the proof of the claim, we note that it is sufficient to
show that $p$ is a uniformly isolated zero of $X_{G_t}$. Clearly, for any
vector field $Y_t$,
\begin{equation}
\labell{eq:XY}
\|X_{G_t}(z)\|\geq \|Y_t(z)\|-\|Y_t(z)-X_{G_t}(z)\|.
\end{equation}
Using the linear structure on $\R^{2n}$, we set
$$
Y_t(z)=tX_{F_k}(\psi_k(z))+(1-t)k\cdot X_F(\psi(z))
$$
and bound the first term on the right hand side of \eqref{eq:XY} from
below and the second term from above.

By \eqref{eq:Xk} and the definition of $Y_t(z)$,
\begin{equation}
\labell{eq:Y}
\begin{split}
\|Y_t(z)\| &\geq k\|X_F(\psi(z))\|-\|X_{F_k}(\psi_k(z))-kX_F(\psi(z))\|\\
&\geq \big(k-O(\|\varphi-\id\|_{C^1})\big)\|X_F(\psi(z))\|.
\end{split}
\end{equation}
Next we show that
\begin{equation}
\labell{eq:YX}
\|Y_t(z)-X_{G_t}(z)\|\leq O(\|\varphi-\id\|_{C^1})\|X_F(\psi(z))\|.
\end{equation}
To this end, first note that
\begin{equation}
\labell{eq:zpsi}
\begin{split}
\|X_F(z)-X_F(\psi(z))\| &\leq \|X_F\|_{C^1}\cdot\|\psi(z)-z\|\\
&\leq \|X_F\|_{C^1}\cdot\|\varphi(z)-z\|\\
&\leq \|X_F\|_{C^1}\cdot\|X_F(\psi(z))\|\\
&= O(\|\varphi-\id\|_{C^1})\|X_F(\psi(z))\|,
\end{split}
\end{equation}
where the second inequality readily follows from the definition of
$\psi$. Similarly,
$$
\|X_{F_k}(z)-X_{F_k}(\psi_k(z))\|
\leq O(\|\varphi-\id\|_{C^1})\|X_{F_k}(\psi_k(z))\|.
$$
Hence, by \eqref{eq:Xk2},
\begin{equation}
\labell{eq:zpsik}
\|X_{F_k}(z)-X_{F_k}(\psi_k(z))\|
\leq O(\|\varphi-\id\|_{C^1})\|X_{F}(\psi(z))\|.
\end{equation}
Furthermore,
\begin{equation*}
\begin{split}
\|Y_t(z)-X_{G_t}(z)\|
&\leq t\|X_{F_k}(z)-X_{F_k}(\psi_k(z))\| +(1-t)k\|X_F(z)-X_F(\psi(z))\|\\
&\leq \|X_{F_k}(z)-X_{F_k}(\psi_k(z))\| +k\|X_F(z)-X_F(\psi(z))\|.
\end{split}
\end{equation*}
Combining this with \eqref{eq:zpsi} and \eqref{eq:zpsik}, we obtain
\eqref{eq:YX}. Finally, using the bounds \eqref{eq:Y} and
\eqref{eq:YX} and the inequality \eqref{eq:XY}, we conclude that
$$
\|X_{G_t}(z)\|\geq \big(k-O(\|\varphi-\id\|_{C^1})\big)\|X_F(\psi(z))\|.
$$
It is immediate to see that $\psi$ is a diffeomorphism on a
sufficiently small neighborhood of the origin and $\psi(p)=p$. Hence,
$\psi(z)=p$ implies that $z=p$. Furthermore, $p$ is a uniformly isolated zero
of $X_F$ by (GF1). Thus, $p$ is also a uniformly isolated zero of
$X_{G_t}$. This completes the proof of the claim.
\end{proof}

\subsection{Particular case 3: split diffeomorphisms}
\labell{sec:split}

Assume that $\R^{2n}$ is decomposed as a product of two symplectic
vector spaces $V$ and $W$ and $H$ is also split, i.e., $H=H_V+H_W$,
where $H_V$ and $H_W$ are Hamiltonians on $V$ and, respectively, $W$
with flows fixing the origin. Assume, in addition, that the time
one-map $\varphi_{H_V}$ of $H_V$ is non-degenerate and the time-one
map $\varphi_{H_W}$ of $H_W$ is strongly degenerate. Then combining
the previous two particular cases and applying the K\"unneth formula
for local Floer homology (see (LF4)), we conclude that the theorem
holds for~$H$.

More generally, assume that $\varphi$, but not necessarily $H$, is
split, i.e., $\varphi_H=(\varphi_{V},\varphi_{W})$. Then $\varphi_{V}$
and $\varphi_{W}$ are germs of symplectomorphisms fixing $p$, and
hence both $\varphi_{V}$ and $\varphi_{W}$ are Hamiltonian; see Remark
\ref{rmk:loc-ham}. As above, denote by $H_V$ and $H_W$ some
Hamiltonians generating $\varphi_{V}$ and, respectively,
$\varphi_{W}$. We do not necessarily have $H=H_V+H_W$, but since local
Floer homology is determined by $\varphi$ up to a shift of indices,
$$
\HF_*(H^{\# k},p)=\HF_{*+s'_k}(H^{\# k}_V+H^{\# k}_W,p).
$$

The Hamiltonian $H_V+H_W$ is split and, as has been shown above, the
theorem holds for $H_V+H_W$. It remains to prove that the additional
shifts $s'_k$ are even. This, however, is clear, for $s'_k/2$ is the
Maslov index of the loop obtained by taking the concatenation of the
flow of $H^{\# k}$ and the inverse flow of $H^{\# k}_V+H^{\# k}_W$.

\subsection{The general case}

Let $\varphi$ be the germ of the Hamiltonian diffeomorphism fixing the
origin $p$ in $\R^{2n}$ and generated by $H$. For some decomposition
$\R^{2n}=V\times W$ the linearization $d\varphi_p$ splits as the
direct sum of a symplectic linear map on $V$ whose eigenvalues are all
different from one and a symplectic linear map on $W$ with all
eigenvalues equal to one. Then, if $k$ is admissible, the same
splitting holds for $d\varphi^k_p$.  We will show that $\varphi$ is
homotopic to a split map via Hamiltonian diffeomorphisms
with uniformly isolated fixed point at $p$ and linearization
$d\varphi_p$. Denote such a homotopy by $\varphi_s$, $s\in
[0,\,1]$. Then $p$ is also a uniformly isolated fixed point for all
maps in the iterated homotopy $\varphi_s^k$ (see Remark
\ref{rmk:persist-iso} and Proposition \ref{prop:persist-iso2}) and the
theorem follows from Case 3 and the invariance of local Floer homology
under homotopy; see (LF1).

To be more precise, let $K_{s}$ be the Hamiltonian generating
$\varphi_s$ as its time-one map and obtained, up to an obvious
reparametrization, by concatenating the flow $\varphi_H^t$, $t\in
[0,\, 1]$, with the homotopy $\varphi_\zeta$, $\zeta\in [0,\,s]$. Then
$p$ is a uniformly isolated fixed point of $\varphi^k_{K_s}$ for all $s\in
[0,\,1]$ and all admissible $k$. Hence, by (LF1),
$$
\HF_*(H^{\# k},p)=\HF_*(K^{\# k}_1,p).
$$
In addition, $\Delta_H(p)=\Delta_{K_1}(p)$, for $d\varphi_s$ at $p$ is
constant.  Since $\varphi_1=\varphi_{K_1}$ is split, the theorem holds
for $K_1$.  Therefore, the theorem also holds for $H$.

Let us now construct the homotopy $\varphi_s$. Let $N_V$ and $N_W$ be
Lagrangian complements to the diagonals $\Delta_V\subset V\times
\bar{V}$ and, respectively, $\Delta_W\subset W\times \bar{W}$,
transverse to the graphs of $d\varphi_p\mid_V$ and $d\varphi_p\mid_W$.  Then
$N=N_V\times N_W$ is a Lagrangian complement to the diagonal $\Delta$
in $\R^{2n}\times \bar{\R}^{2n}$, transverse to the graph of
$d\varphi_p$, and hence to the graph of $\varphi$ on a small
neighborhood of $p$. Denote by $F$ the generating function of $\varphi$
with respect to $N$ on a neighborhood of $p$. Note that $p$ is an
isolated critical point of $F$ and $d^2 F_p$ is split.

We will construct a family of functions $F_s$, $s\in [0,\,1]$, on a
neighborhood of $p$ starting with $F_0=F$ and such that
\begin{itemize}

\item $p$ is a uniformly isolated critical point of $F_s$,

\item $d^2 (F_s)_p=d^2 F_p$,

\item $F_1$ is split, i.e., $F_1$ is the sum of a function $q$ on $V$
and a function $f$ on $W$ near $p$.
\end{itemize}

Once the family $F_s$ is constructed, $\varphi_s$ is defined in an
obvious way via identifying the graph of $\varphi_s$ with the graph of
$d F_s$ in $\R^{2n}\times \bar{\R}^{2n}=\Delta\times
N=T^*\Delta$. (The graph of $d F_s$ is transverse to the fibers of the
projection $\pi\colon \R^{2n}\times \bar{\R}^{2n}\to \R^{2n}$ near
$p$, since $d^2 (F_s)_p=d^2 F_p$ and the graph of $dF$, coinciding
with the graph of $\varphi$, is transverse to the fibers.) Note also
that in the decomposition $F_1=q+f$, the function $q$ is a
non-degenerate quadratic form on $V$ (in fact, $q=d^2 F_p\mid_V$) and $f$
is a function on $W$ with isolated critical point at the origin.

To find the family $F_s$, we argue as follows. First observe that, by
the implicit function theorem, there exists (near $p$) a unique smooth
map $\Phi\colon W\to V$ such that $\Phi(0)=0$ and $F\mid_{V\times w}$ has
a critical point at $\Phi(w)$. Let $\Sigma$ be the graph of $\Phi$. It
is easy to see that $d\Phi$ vanishes at the origin, for $d^2F_p$ is
split, and hence $\Sigma$ is tangent to $W$ at $p$. Now $F_s$ is
constructed in two steps. First, we use an isotopy on a neighborhood
of $p$, fixing $p$ and having the identity linearization at $p$, to
move $\Sigma$ to $W$. This isotopy turns $F$ into a function, say
$F_{0.5}$, such that $F_{0.5}\mid_{V\times w}$ has a non-degenerate
critical point at $(0,w)$ for all $w$ near the origin. As the second
step, we apply the parametric Morse lemma to $F_{0.5}\mid_{V\times w}$ to
obtain a homotopy from $F_{0.5}$ to a function $F_1$ of the desired
form $q+f$.

This concludes the proof of the theorem.

\section{Symplectically degenerate maxima}
\labell{sec:sdm-product}

Strongly degenerate periodic orbits with persistent Floer homology in
degree $n$, referred to in \cite{Gi:conley} as symplectically
degenerate maxima, play a particularly interesting role in the proof
of the Conley conjecture; see \cite{Gi:conley}. This role is further
exemplified by Theorem \ref{thm:persist-lf} and we feel that features
of such orbits merit further investigation.  In this section, we
characterize symplectically degenerate maxima in homological and
geometrical terms and then, in Section \ref{sec:product}, touch upon
``vanishing properties'' of the pair-of-pants product in local Floer
homology. Namely, we show that a periodic orbit is a symplectically
degenerate maximum if and only if the product is \emph{not} in a
certain sense nilpotent. The latter topic is rather tangential to the
main subject of the paper and is treated here very briefly, skipping
some technical details.

\subsection{Homological and geometrical properties of symplectically
degenerate maxima}
\labell{sec:sdm}

Let $\gamma$ be a one-periodic orbit of the flow of a Hamiltonian $H$
on a symplectically aspherical manifold $M^{2n}$. In fact, it suffices
to assume that $H$ is the germ of a Hamiltonian on a neighborhood of
$\gamma$.

\begin{Definition}
\labell{def:sdm}
The orbit $\gamma$ is said to be a \emph{symplectically degenerate
maximum} of $H$ if $\Delta_H(\gamma)=0$ and $\HF_n(H,\gamma)\neq 0$.
\end{Definition}

\begin{Example}
\labell{exam:sdm} Let $H$ be an autonomous Hamiltonian attaining a
strict local maximum at $p$.  Assume in addition that $d^2 H_p=0$ or,
more generally, that all eigenvalues of $d^2 H_p$ are zero.  Then it
is easy to see that $p$ is a symplectically degenerate maximum of $H$,
cf.\ Proposition \ref{prop:sdm2}. (Here, as is customary in
Hamiltonian dynamics, the eigenvalues of a quadratic form on a
symplectic vector space are, by definition, the eigenvalues of the
linear symplectic vector field generated by the quadratic form.)
\end{Example}

\begin{Proposition}
\labell{prop:sdm1}

The following conditions are equivalent:

\begin{itemize}

\item[(a)] the orbit $\gamma$ is a symplectically degenerate maximum of $H$;

\item[(b)] $\HF_n(H^{\# k_i},\gamma^{k_i})\neq 0$ for some sequence of
admissible iterations $k_i\to\infty$;

\item[(c)] the orbit $\gamma$ is strongly degenerate,
$\HF_n(H,\gamma)\neq 0$ and $\HF_n(H^{\# k},\gamma^k)\neq 0$ for at
least one admissible iteration $k\geq n+1$.

\end{itemize}

\end{Proposition}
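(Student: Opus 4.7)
The plan is to prove (a)$\Leftrightarrow$(b) and (a)$\Leftrightarrow$(c), in each case combining Theorem~\ref{thm:persist-lf} with the support bound (LF5), the iteration formula (MI1), and the integrality of the mean index for strongly degenerate orbits (MI8).

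I would dispense with (a)$\Rightarrow$(b) and (a)$\Rightarrow$(c) first, both of which are immediate consequences of the ``moreover'' clause of Theorem~\ref{thm:persist-lf}: under hypothesis (a), the orbit $\gamma$ is strongly degenerate, so every $k>0$ is admissible, and all shifts $s_k$ vanish, giving $\HF_n(H^{\# k},\gamma^k)=\HF_n(H,\gamma)\neq 0$ for every $k\geq 1$.

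For (c)$\Rightarrow$(a) only $\Delta_H(\gamma)=0$ needs to be checked. Strong degeneracy and (MI8) give $\Delta_H(\gamma)\in 2\Z$. From $\HF_n(H,\gamma)\neq 0$ and (LF5), one has $\Delta_H(\gamma)\geq 0$. Using (MI1) and applying (LF5) to $\gamma^k$, the assumption $\HF_n(H^{\# k},\gamma^k)\neq 0$ forces $k\Delta_H(\gamma)\leq 2n$, hence $0\leq\Delta_H(\gamma)\leq 2n/(n+1)<2$, and the only even integer in this range is $0$.

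The main work is (b)$\Rightarrow$(a). First, applying (LF5) to $\gamma^{k_i}$ yields $k_i\Delta_H(\gamma)\in [0,\,2n]$, and letting $k_i\to\infty$ produces $\Delta_H(\gamma)=0$. Second, I would argue that $\gamma^{k_i}$ must be strongly degenerate for all large $i$: otherwise the sharper, open-interval version of (LF5) for weakly non-degenerate orbits would place the support of $\HF_*(H^{\# k_i},\gamma^{k_i})$ strictly inside $(-n,\,n)$, contradicting $\HF_n(H^{\# k_i},\gamma^{k_i})\neq 0$. Admissibility of $k_i$ then identifies the generalized $1$-eigenspaces of $d\varphi_H$ and $d\varphi_H^{k_i}$, the latter being all of $T_pM$, so $\gamma$ itself is strongly degenerate. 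The final step, which I expect to be the main obstacle, is to pass from this to $\HF_n(H,\gamma)\neq 0$; one cannot cite the ``moreover'' clause of Theorem~\ref{thm:persist-lf} directly, since that clause already assumes $\HF_n(H,\gamma)\neq 0$. Instead, I would extract from the proof of Particular Case~2 in Section~\ref{sec:pf-lf-degen} the stronger fact that strong degeneracy of $\gamma$ together with $\Delta_H(\gamma)=0$ alone already forces $s_k=0$ for every admissible $k$ (the argument there bounds $m_k=\Delta_{K_k}(p)-k\Delta_H(p)$ to lie strictly between $-2$ and $2$, and then invokes its integrality); applying this with $k=k_i$ gives $\HF_n(H,\gamma)=\HF_n(H^{\# k_i},\gamma^{k_i})\neq 0$, completing (a).
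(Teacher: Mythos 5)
Your proposal is correct and follows essentially the same route as the paper: (a)$\Rightarrow$(b) and (a)$\Rightarrow$(c) from the ``moreover'' clause of Theorem \ref{thm:persist-lf}, and (c)$\Rightarrow$(a) by the same mean-index/support computation via (MI8), (MI1) and (LF5). The one place you do more work is (b)$\Rightarrow$(a), which the paper dismisses as immediate; your observation that the ``moreover'' clause cannot be cited directly there, and your fix --- establishing strong degeneracy of $\gamma$ via the open-interval case of (LF5) plus admissibility, and then extracting from the proof of Particular Case 2 that strong degeneracy together with $\Delta_H(\gamma)=0$ already forces $m_k=0$, hence $s_k=0$, without presupposing $\HF_n(H,\gamma)\neq 0$ --- is a correct reading of that proof and supplies exactly the missing step.
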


\begin{proof}
The facts that (a) and (b) are equivalent and that (a) implies (c)
follow immediately from Theorem \ref{thm:persist-lf}. To show that (c)
implies (a), it is sufficient to prove that
$\Delta_H(\gamma)=0$. Assume the contrary.  Then
$|\Delta_H(\gamma)|\geq 2$ since $\Delta_H(\gamma)\in 2\Z$ due to the
assumption that $\gamma$ is strongly degenerate and (MI8). Thus,
$$
|\Delta_{H^{\# k}}(\gamma^k)|=k|\Delta_H(\gamma)|\geq 2k\geq 2(n+1).
$$
Therefore, by (LF5), $\HF_*(H^{\# k},\gamma^k)$ is supported in the
interval $[n+1,\, 3n+1]$, which contradicts the condition that
$\HF_n(H^{\# k},\gamma^k)\neq 0$.
\end{proof}

As a consequence of the proposition, we observe that, for any
admissible iteration $k$, the orbit $\gamma^k$ is a symplectically
degenerate maximum if and only if $\gamma$ is a symplectically
degenerate maximum.

To illuminate the geometrical nature of symplectically degenerate
maxima, let us assume that the orbit $\gamma$ is constant, i.e.,
$\gamma(t)\equiv p$ and $H$ is defined on a neighborhood of $p$. Then,
as our next result shows, the behavior of $\varphi_H$ near $p$ is
similar to that described in Example \ref{exam:sdm}. The essence of
this result is that $p$ is a symplectically degenerate maximum of $H$
if and only if $\varphi_H$ can be generated by a Hamiltonian $K$ with
local maximum at $p$ and arbitrarily small Hessian.

\begin{Proposition}
\labell{prop:sdm2}

The point $p$ is a symplectically degenerate maximum of $H$ if and
only if for every $\eps>0$ there exists a Hamiltonian $K$ near $p$
such that $\varphi_K=\varphi_H$ in the universal covering of the group
of local symplectomorphisms fixing $p$ and
\begin{itemize}
\item[(i)] $p$ is a strict local maximum of $K_t$ for all $t\in S^1$,
\item[(ii)] $\|d^2 (K_t)_p\|_{\Xi}<\eps$ for all $t\in S^1$ and some
symplectic basis $\Xi$ in $T_p M$.
\end{itemize}

\end{Proposition}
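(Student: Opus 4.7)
The plan is to prove the two implications separately, using Lemma~\ref{lemma:LFH-LMH} and the characterization of local maxima in Example~\ref{exam:Morse-max}. I work in local coordinates near $p=0$ in $\R^{2n}$.

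For the direction $(\Leftarrow)$, suppose such $K=K^{(\eps)}$ exists for every $\eps>0$. The linearization $d(\varphi_K^t)_p$ is the Hamiltonian flow of the time-dependent quadratic form $d^2(K_t)_p$ of norm $<\eps$, so $d\varphi_K|_p$ is $O(\eps)$-close to $I$ and $\Delta_K(p)=O(\eps)$. Since $\varphi_K=\varphi_H$ in the universal cover, $d\varphi_H|_p=d\varphi_K|_p$ has eigenvalues arbitrarily close to $1$, forcing them to equal~$1$, so $p$ is strongly degenerate. By (MI8), $\Delta_K(p)\in 2\Z$, hence $\Delta_K(p)=0$ for small $\eps$; matching via (MI7) then gives $\Delta_H(p)=\Delta_K(p)=0$. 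To compute $\HF_n(K,p)$, note that $\bar K=\int_0^1 K_t\,dt$ is autonomous with a strict local maximum at $p$ and $\|d^2 \bar K_p\|<\eps$. The linear homotopy $K^s=(1-s)K+s\bar K$ preserves the pointwise strict maximum at~$p$, so $p$ remains a uniformly isolated one-periodic orbit; by (LF1), Example~\ref{ex:LFH-LMH}, and Example~\ref{exam:Morse-max},
\begin{equation*}
\HF_n(K,p)=\HF_n(\bar K,p)=\HM_{2n}(\bar K,p)=\Z_2.
\end{equation*}
Since the loop relating the $K$- and $H$-paths has Maslov index $\mu=\tfrac12(\Delta_K(p)-\Delta_H(p))=0$, (LF6) yields $\HF_n(H,p)=\HF_n(K,p)\neq 0$, so $p$ is an SDM.

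For the direction $(\Rightarrow)$, suppose $p$ is an SDM of $H$. By Proposition~\ref{prop:sdm1}, $p$ is strongly degenerate, and a linear symplectic change of basis $\Xi$ brings $d\varphi_H|_p$ arbitrarily close to $I$, making $\varphi_H$ itself $C^1$-close to $\id$ on a small neighborhood of $p$. Let $F$ be the generating function of $\varphi_H$ relative to the Lagrangian complement $N_0$ associated with~$\Xi$; by (GF1)--(GF2), $p$ is an isolated critical point of $F$ with $\|F\|_{C^2}$ and $\|d^2 F_p\|$ as small as we please. The construction of Section~\ref{sec:pf-lf-degen} (originally from~\cite{Gi:conley,Hi}) produces a one-periodic Hamiltonian $K$ with $\varphi_K=\varphi_H$ near $p$, satisfying the hypotheses of Lemma~\ref{lemma:LFH-LMH} with respect to $F$, so
\begin{equation*}
\HF_*(K,p)=\HM_{*+n}(F,p),
\end{equation*}
and each $K_t$ is a $C^2$-small perturbation of~$F$. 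Matching universal-cover classes via the mean-index computation of Section~\ref{sec:pf-lf-degen} (using $\Delta_H(p)=0$ together with $\Delta_K(p)\in 2\Z$ and $\Delta_K(p)=O(\eps)$, hence $\Delta_K(p)=0$) shows $\HF_*(K,p)=\HF_*(H,p)$ without shift, so $\HM_{2n}(F,p)=\HF_n(H,p)\neq 0$. By the converse part of Example~\ref{exam:Morse-max}, $F$ has a strict local maximum at~$p$.

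The main obstacle is the simultaneous control, in the $(\Rightarrow)$ direction, of (a) $\varphi_K=\varphi_H$ in the universal cover, (b) a \emph{strict} local maximum at~$p$ for \emph{every} $K_t$, and (c) $\|d^2(K_t)_p\|_{\Xi}<\eps$. Point (a) is secured by the mean-index accounting, and (c) by the initial conjugation bringing $d\varphi_H|_p$ very close to~$I$. The delicate point is (b): when $d^2 F_p$ is small or zero, the maximum of $F$ is ``soft'' and a generic time-dependent perturbation could destroy strictness. To resolve this I would appeal to the explicit form of the Hein-type construction, in which $K_t=\beta'(t)F+\text{(lower order)}$ with $\beta'\ge 0$ a monotone cutoff profile, so the leading term preserves the maximum for each~$t$; a final $C^\infty$-small modification -- for instance adding a nonpositive time-dependent term vanishing to sufficiently high order at~$p$ -- then enforces pointwise strictness without altering the linearization, the time-one map, or the universal-cover class, and without violating the bound~$\eps$.
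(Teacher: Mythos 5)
Your ``if'' direction follows the paper's strategy (deduce $\Delta_H(p)=0$ from (ii) and the equality in the universal cover, then compute $\HF_n$), but the ending has an unjustified step. You claim that along the linear homotopy $K^s=(1-s)K+s\bar K$ the point $p$ remains a \emph{uniformly isolated one-periodic orbit} because each $K^s_t$ keeps a strict local maximum at $p$. Strict maximality of $K^s_t$ at $p$ controls the critical points of the functions $K^s_t$, not the one-periodic orbits of the time-dependent flow of $K^s$, and it is the latter that (LF1) requires; conditions (i)--(ii) bound the Hessian only \emph{at} $p$ and do not place you in the hypotheses of Lemma \ref{lemma:LFH-LMH}, which demands the pointwise estimate $\|X_{K_t}-X_F\|\leq\eps\|X_F\|$ near $p$. (You also use Example \ref{ex:LFH-LMH} for $\bar K$ without knowing that $p$ is an isolated critical point of $\bar K$.) The paper sidesteps this by perturbing $K$ to a non-degenerate $\tilde K$ with a non-degenerate maximum and small Hessian at $p$ and no other nearby one-periodic orbits, so that $\MUCZ(\tilde K,p)=n$ gives $\HF_n(K,p)=\Z_2$ directly.

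The ``only if'' direction is where the substantive gap lies. The paper declares this the non-trivial half and simply cites \cite[Proposition 4.5]{Gi:conley}. Your reduction --- conjugating $d\varphi_H$ at $p$ close to the identity, passing to the generating function $F$, and using Lemma \ref{lemma:LFH-LMH} together with the mean-index bookkeeping of Section \ref{sec:pf-lf-degen} to conclude $\HM_{2n}(F,p)=\HF_n(H,p)\neq 0$, hence that $F$ has a strict local maximum at $p$ by Example \ref{exam:Morse-max} --- is correct and is indeed how the cited proof begins. But the passage from ``$F$ has a strict local maximum'' to ``each $K_t$ has a strict local maximum'' is precisely the content of that proposition, and your proposed resolution fails on two counts. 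First, adding a nonzero time-dependent term to $K_t$, however small and however flat at $p$, changes the Hamiltonian vector field and hence the flow, so it cannot ``enforce strictness without altering the time-one map.'' Second, since the maximum of $F$ at $p$ is degenerate ($d^2F_p$ is small or zero), $F-F(p)$ may vanish to high order at $p$, and there is no a priori reason the unexamined ``lower order'' corrections to $\beta'(t)F$ are dominated by it; controlling them is exactly the delicate estimate carried out in \cite{Gi:conley}. As written, the forward implication is not proved.
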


To clarify the terminology used here, recall that $\|d^2
(K_t)_p\|_{\Xi}$ stands for the norm of $d^2 (K_t)_p$ with respect to
the Euclidean inner product on $T_p M$ for which $\Xi$ is an
orthonormal basis; see \cite[Section 2.1.3]{Gi:conley}.

\begin{proof}[Proof of Proposition \ref{prop:sdm2}]
The non-trivial part of the proposition is that a Hamiltonian $K$ with
the required properties exists whenever $p$ is a symplectically
degenerate maximum. This is established in \cite[Proposition
4.5]{Gi:conley}.  Conversely, $\Delta_H(p)=\Delta_K(p)$ by (MI8). We
infer from (ii) that $|\Delta_K(p)|$ can be made arbitrarily small for
a suitable choice of $K$. Thus, $\Delta_H(p)=0$. Furthermore, using
(i), it is straightforward to construct a $C^2$-small perturbation
$\tilde{K}$ of $K$ such that $p$ is a non-degenerate local maximum of
$\tilde{K}_t$, and $\tilde{K}$ has no other one-periodic orbits near
$p$. Since $\|d^2 (\tilde{K}_t)_p\|_{\Xi}$ is small,
$\MUCZ(\tilde{K},p)=n$. Hence, $\HF_n(K,p)=\HF_n(\tilde{K},p)=\Z_2$.
\end{proof}

\begin{Remark}
It is clear from Propositions \ref{prop:sdm1} and \ref{prop:sdm2} that
the definition of a symplectically degenerate maximum given here is
equivalent to the one in \cite{Gi:conley}. (As a consequence, the
additional requirement (K3) in \cite[Definition 4.1]{Gi:conley} is
superfluous and follows from (K1) and (K2), reformulations of (i) and
(ii).) The proof of Proposition \ref{prop:sdm2} also shows that in
Definition \ref{def:sdm} and in (b) and (c) the conditions
$\HF_n(H,\gamma)\neq 0$ and $\HF_n(H^{\# k},\gamma^k)\neq 0$ can be
replaced by the more specific requirement that these Floer homology
groups are isomorphic to $\Z_2$.
\end{Remark}

The definition a symplectically degenerate maximum and Propositions
\ref{prop:sdm1} and \ref{prop:sdm2} extend word-for-word to isolated
fixed points of Hamiltonian diffeomorphisms $\varphi\colon M\to M$,
for the local Floer homology and the mean index are completely
determined by $\varphi$. When $\varphi$ is just the germ of a
Hamiltonian diffeomorphism near an isolated fixed point $p$, the
grading of local Floer homology and the mean index are defined only up
to a shift by the same even integer.  In this case, we say that $p$ is
a \emph{local} symplectically degenerate maximum when $\varphi$ can be
generated by a Hamiltonian $H$ with flow fixing $p$ and symplectically
degenerate maximum at $p$. By (MI8), (LF5) and (LF6), this is
equivalent to that $\Delta_K(p)\in \Z$ and
$\HF_{n+\Delta_K(p)}(K,p)\neq 0$ for any (or, equivalently, some)
Hamiltonian $K$ with $\varphi_K=\varphi$ and $\varphi_K^t(p)\equiv
p$. Furthermore, then $\Delta_K(p)$ is necessarily even. (Warning: a
fixed point $p$ of $\varphi\colon M\to M$ can be a local
symplectically degenerate maximum of the germ of $\varphi$ at $p$, but
not a symplectically degenerate maximum of~$\varphi$.)

\subsection{Product in local Floer homology}
\labell{sec:product}

The construction of the pair-of-pants product in Floer homology (see,
e.g., \cite{mdsa,PSS,Sc:thesis}) carries over in an obvious way to
local Floer homology. Thus, we have a product
$$
\underbrace{\HF_*(H,\gamma)\otimes \ldots \otimes\HF_*(H,\gamma)}_{r}
\to \HF_*(H^{\# r},\gamma^r),
$$
where $\gamma$ is an isolated one-periodic orbit of $H$ and $r$ is
admissible. When $u_i\in \HF_{l_i}(H,\gamma)$, the product
$u_1\cdot\ldots\cdot u_r$ has degree $l_1+\ldots+l_r-(r-1)n$, where
$\dim M=2n$. Up to a shit of degree, the product is a feature of the
germ of $\varphi=\varphi_H$ at the fixed point $\gamma(0)=p$.  Indeed,
assume for the sake of simplicity that $\gamma$ is constant.  Then
$\HF_*(H^{\# r},p)$ is isomorphic to $\HF_{*+m_r}(K^{\# r},p)$ for any
two Hamiltonians $H$ and $K$ generating $\varphi$ near $p$ and some
$m_r$. The isomorphism is induced by the composition with the
corresponding loop of Hamiltonian diffeomorphisms near $p$ and, as is
clear from the definition, this isomorphism preserves the
pair-of-pants product.

To set the stage for our discussion of ``vanishing properties'' of the
pair-of-pants product in local Floer homology, recall that the Morse
theoretic counterpart of this product is the cup product in local
Morse homology; see, e.g., \cite{Jo,Sc:thesis}. Let $F$ be a germ of a
smooth function near its isolated critical point $p\in \R^m$.  One can
show that the cup product in $\HM_*(F,p)$ is trivial unless $p$ is a
local maximum of $F$. (In the latter case, $\HM_*(F,p)$ is
concentrated in degree $m$ and $u^k=u$ for all $k$, where $u$ is the
generator of $\HM_m(F,p)=\Z_2$.) In particular, $u\cdot v=0$ for any
two distinct elements $u$ and $v$ in $\HM_*(F,p)$ regardless of
whether $p$ is a local maximum or not. These properties are inherited,
in a somewhat weaker form, by the pair-of-pants product.

\begin{Proposition}
\labell{prop:product}

Assume that $\gamma(0)$ is not a local symplectically degenerate
maximum of the germ of $\varphi_H$ at $\gamma(0)$. Then the product in
$\HF_*(H,\gamma)$ is ``nilpotent'', i.e., there exists $r_0$,
depending only on the linearized flow along $\gamma$, such that
$u_1\cdot\ldots\cdot u_r=0$ for any admissible $r\geq r_0$ and any
classes $u_1,\ldots, u_r$ in $\HF_*(H,\gamma)$.

\end{Proposition}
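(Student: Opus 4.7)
The plan is to combine the support bound (LF5) with the iteration formula (MI1) and the degree formula for the pair-of-pants product: since a product of $r$ classes lives in the target $\HF_*(H^{\#r}, \gamma^r)$, which is supported in a window of size $2n$ about $r\Delta_H(\gamma)$, while the degree of the product is essentially $r$ times the ``top degree'' of $\HF_*(H,\gamma)$ (modulo the $(r-1)n$ shift), a mismatch of order $r\delta$ will force vanishing for large $r$, provided we can produce a definite gap $\delta > 0$ from the hypothesis. As in Section~\ref{sec:persist}, I would first reduce to the situation where $\gamma(t) \equiv p$ is a constant orbit; this preserves both the grading of $\HF_*(H,\gamma)$ and the mean index.

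Write $\Delta := \Delta_H(\gamma)$ and let $\ell_{\max}$ be the largest integer with $\HF_{\ell_{\max}}(H,\gamma) \neq 0$. The crux is to show that the hypothesis ``$p$ is not a local symplectically degenerate maximum'' produces a gap $\delta = \delta(d\varphi_H) > 0$ with $\ell_{\max} \leq \Delta + n - \delta$. If $\gamma$ is weakly non-degenerate, (LF5) places the support of $\HF_*(H,\gamma)$ inside the \emph{open} interval $(\Delta - n,\, \Delta + n)$, so the integrality of $\ell_{\max}$ forces $\delta \geq 1$ when $\Delta \in \Z$, and $\delta \geq \Delta + n - \lfloor \Delta + n \rfloor > 0$ otherwise. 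If $\gamma$ is strongly degenerate, then $\Delta \in 2\Z$ by (MI8), and the homological characterization of local symplectically degenerate maxima given in Section~\ref{sec:sdm}, applied to our constant-orbit reduction, shows that ``not a local SDM'' is equivalent to $\HF_{n + \Delta}(H,\gamma) = 0$; hence $\ell_{\max} \leq \Delta + n - 1$, i.e., $\delta \geq 1$.

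Next I would invoke that the pair-of-pants product sends classes $u_i \in \HF_{l_i}(H,\gamma)$ to a class of degree $\sum_i l_i - (r-1)n$ inside $\HF_*(H^{\#r}, \gamma^r)$, whose support lies in $[r\Delta - n,\, r\Delta + n]$ by (LF5) combined with (MI1). The gap estimate gives
\[
\sum_{i=1}^r l_i - (r-1)n \;\leq\; r(\Delta + n - \delta) - (r-1)n \;=\; r\Delta + n - r\delta,
\]
and for this degree to fall inside the support of the target one would need $r\Delta + n - r\delta \geq r\Delta - n$, i.e., $r \leq 2n/\delta$. Choosing $r_0 := \lceil 2n/\delta \rceil + 1$ therefore forces $u_1 \cdot \ldots \cdot u_r = 0$ for every admissible $r \geq r_0$; and since $\delta$ depends only on $\Delta$ and $n$, the integer $r_0$ depends only on the linearized flow along $\gamma$.

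The main obstacle is the gap estimate in the strongly degenerate case: it is the only step where the hypothesis genuinely enters, and it relies on both the reduction to a constant orbit (so that ``local SDM of the germ'' matches ``SDM'' for the Hamiltonian at hand) and the homological characterization of local symplectically degenerate maxima developed in Section~\ref{sec:sdm}. Once this is in hand, the remainder is just a dimension count matching the degree of the product against the support window of $\HF_*(H^{\#r}, \gamma^r)$.
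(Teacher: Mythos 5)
Your proposal is correct and follows essentially the same route as the paper: the product of $r$ classes has degree at most $r\Delta_H(\gamma)+n-r\delta$ while the target $\HF_*(H^{\# r},\gamma^r)$ is supported in $[r\Delta_H(\gamma)-n,\,r\Delta_H(\gamma)+n]$, forcing vanishing for $r>2n/\delta$. The only difference is that you spell out the existence of the gap $\delta>0$ (via the case split and the homological characterization of local symplectically degenerate maxima), which the paper states more tersely as the support of $\HF_*(H,\gamma)$ being contained in $(-\infty,\,\Delta_H(\gamma)+n)$.
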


\begin{proof}
  The proof is essentially the observation that, unless $\gamma(0)$ is
  a local symplectically degenerate maximum, the degree $l$ of
  $u_1\cdot\ldots\cdot u_r$ is necessarily outside the support of
  $\HF_*(H^{\# r},\gamma^r)$ for a large enough $r$. Let, as above,
  $u_i\in \HF_{l_i}(H,\gamma)$ and $u_i\neq 0$.  Then the mean
  $(l_1+\ldots+l_r)/r$ also lies in the support of $\HF_*(H,\gamma)$,
  which, in turn, is contained in $(-\infty,\,\Delta_H(\gamma)+n)$,
  since $\gamma(0)$ is not a local symplectically degenerate maximum
  of the germ of $\varphi_H$. Thus,
$$
(l_1+\ldots+l_r)/r-\Delta_H(\gamma)-n\leq -\delta
$$
for some $\delta>0$ independent of $r$ and $l_1,\ldots,l_r$. It follows that
\begin{equation*}
\begin{split}
l-r\Delta_H(\gamma) &=
l_1+\ldots+l_r-(r-1)n-r\Delta_H(\gamma)\\
&=r\left(\frac{l_1+\ldots+l_r}{r}-\Delta_H(\gamma)-n\right)+n\\
&\leq -\delta r+n.
\end{split}
\end{equation*}
The support of $\HF_*(H^{\# r},\gamma^r)$ is contained in
$[r\Delta_H(\gamma)-n,\,r\Delta_H(\gamma)+n]$. Hence, when $r>
2n/\delta$, the degree $l$ of the product is outside the support.
\end{proof}

In Proposition \ref{prop:product}, the assumption that $\gamma(0)$ is
not a local symplectically degenerate maximum is essential as the
following example shows.

\begin{Example}
Assume that $p$ is a strict local maximum of an autonomous Hamiltonian
$H$ and the Hessian of $H$ at $p$ is identically zero. Then
$\HF_*(H^{\# k},p)=\HM_{*+n}(H,p)$ for every $k$ and the isomorphism
intertwines the pair-of-pants product and the cup product; cf.\
Example \ref{ex:LFH-LMH}. (This is essentially the fact that the
pair-of-pants product in Floer homology of a $C^2$-small autonomous
Hamiltonian is equal to the cup product in its Morse homology,
\cite{Sc:thesis}; see Section \ref{sec:morse-floer}.) Hence, denoting
by $u$ the generator of $\HF_n(H,p)=\Z_2$, we see that $u^k\neq 0$ for
any $k$ and, moreover, $u^k$ is the generator of $\HF_n(H^{\#
k},p)=\Z_2$.  Replacing the requirement that $d^2H_p=0$ by the
condition that the Hessian is small, we also note that the
pair-of-pants product can be non-trivial even if $p$ is
non-degenerate.
\end{Example}

A slightly more elaborate version of the argument from this example
proves that $u^k$ is a generator of $\HF_n(H^{\# k},\gamma)=\Z_2$ for
any symplectically degenerate maximum and the same is true (up to a
shift of degree) for local symplectically degenerate maxima.  (Namely,
reasoning as in Section \ref{sec:pf-lf-degen} and using Lemma
\ref{lemma:LFH-LMH}, one can equate the local Floer homology of $H$
and its iterations to the Morse homology of a generating function with
a strict, nearly degenerate maximum at $p$. Similarly to the case of
an autonomous Hamiltonian, the resulting isomorphism intertwines
products.)  This leads to a variety of characterizations of
symplectically degenerate maxima via the pair-of-pants product. For
instance, it then follows from Proposition \ref{prop:product} that
\emph{$\gamma$ is a symplectically degenerate maximum of $H$ if and
only if $\HF_n(H,p)=\Z_2$ and $u^k\neq 0$, where $u$ is the generator
of $\HF_n(H,p)$, for every admissible iteration~$k$.}

Instances where the pair-of-pants product vanishes are not exhausted
by Proposition \ref{prop:product}. For example, arguing as in the
proof of Theorem \ref{thm:persist-lf}, one can show that $u\cdot v=0$
for any two distinct elements $u$ and $v$ in $\HF_*(H,\gamma)$.

\begin{Remark}
  One may also consider products of the form $w_1\cdot \ldots \cdot
  w_r\in \HF_l(H^{\# k},\gamma^k)$ with $w_i\in \HF_{l_i}(H^{\#
  k_i},\gamma^{k_i})$, where $l=l_1+\ldots+l_r-(r-1)n$ as above and
  $k=k_1+\ldots+k_r$.  Properties of such products are more involved
  than those of the products with $k_1=\ldots=k_r=1$ considered
  above. For instance, we do not assert that Proposition
  \ref{prop:product} holds for $w_1\cdot \ldots \cdot w_r$ and it is
  certainly not true that the product of two such distinct elements
  $w_1$ and $w_2$ is necessarily zero.  However, Proposition
  \ref{prop:product} readily extends to products of this form when all
  iterations $k_i$ are bounded from above.

In conclusion note that Proposition \ref{prop:product} is analogous to
the nilpotence results for the Chas--Sullivan product established in
\cite{GH}. In fact, it is not unreasonable to expect the
corresponding local homology groups and products to be isomorphic; cf.\
\cite{AS1,AS2,SW,Vi} and references therein.
\end{Remark}

\section{Proof of Theorem \ref{thm:main}}
\labell{sec:pf-main}

The proof of Theorem \ref{thm:main} is based on the analysis of two
cases, similarly to the argument from \cite{Gi:conley} establishing the
Conley conjecture. Namely, since $\HF_n(H)\neq 0$, there exists a
one-periodic orbit $x$ of $H$ with $\HF_n(H,x)\neq 0$. Thus,
$\Delta_H(x)\geq 0$. The first, ``non-degenerate'', case is where
$\Delta_H(x)>0$, while in the second, ``degenerate'', case
$\Delta_H(x)=0$, i.e., $x$ is a symplectically degenerate
maximum. Note that since, in general, $x$ is not unique, the two cases are
not mutually exclusive for a given Hamiltonian $H$. Furthermore, we
emphasize that here, as is required in Theorem \ref{thm:main}, $M$ is
assumed to be closed and symplectically aspherical.

\subsection{Stability of Floer homology}
\labell{sec:stab}

In the proof, we will need the following simple observation asserting
that filtered Floer homology is stable, i.e., cannot be destroyed by a
relatively small perturbation of the Hamiltonian, cf.\
\cite{BC,Ch}. Let $K$ and $F$ be Hamiltonians on $M$. Set
$$
E^+=\int_0^1 \max_M F_t\,dt\quad\text{and}\quad
E^-=-\int_0^1 \min_M F_t\,dt
$$
so that $\|F\|=E^++E^-$ is the Hofer energy of $F$. Furthermore, let
$$
E^+_{0}=\max\{E^+,0\}\quad\text{and}\quad
E^-_{0}=\max\{E^-,0\}\quad\text{and}\quad
E_0(F)=E_{0}^+ +E_{0}^-.
$$
Then
\begin{itemize}
\item $\HF^{(a+E,\,b+E)}_*(K\# F)\neq 0$ for any interval $(a,\,b)$
and any non-negative constant $E\geq E_0(F)$, whenever
the natural ``quotient-inclusion'' map
$$
\kappa\colon \HF^{(a,\,b)}_*(K)\to \HF^{(a+2E,\,b+2E)}_*(K)
$$
is non-zero.
\end{itemize}
This fact is an immediate consequence of commutativity of the
following diagram
\begin{displaymath}
\xymatrix
{
\HF^{(a,\,b)}_*(K) \ar[r] \ar[dr]_{\kappa}
& \HF^{(a+E,\,b+E)}_*(K\# F) \ar[d] \\
& \HF^{(a+2E,\,b+2E)}_*(K)
}
\end{displaymath}
where the horizontal arrow is induced by the linear homotopy from $K$
to $K\# F$ and the vertical arrow is induced by the linear homotopy
from $K\# F$ to $K$; see, e.g.,~\cite{Gi:coiso}.

\begin{Remark}
  This stability result is admittedly very crude and can be refined in
  a number of ways.  For instance, as is clear from its proof, the
  intervals $(a+E,\,b+E)$ and $(a+2E,\,b+2E)$ can be replaced by the
  intervals $(a+E^+_0,\,b+E^+_0)$ and, respectively,
  $(a+E_0,\,b+E_0)$. However, the present version of stability lends
  itself conveniently for the proof of Theorem \ref{thm:main} and
  affords some notational simplifications, while a more precise
  statement appears to only result in a marginally sharper upper bound on the
  action--index gap.
\end{Remark}

\subsection{The ``non-degenerate'' case: $\HF_n(H,x)\neq 0$ and
  $\Delta_H(x)>0$}

We deal with this case under somewhat less restrictive assumptions
that $\HF_*(H,x)\neq 0$ and $\Delta_H(x)>0$.  Then, as is easy to see,
within every infinite set of admissible iterations there exists an infinite
sequence $l_1<l_2<\ldots$ such that
$$
\check{l}\leq l_{i+1}-l_i\leq \hat{l},
$$
where $\check{l}$ and $\hat{l}$ are independent of $i$ and
\begin{equation}
\labell{eq:l}
\frac{2n}{\Delta_H(x)}<\check{l}.
\end{equation}

The local Floer homology $\HF_*(H^{\# l_i},x^{l_i})$ is non-trivial
and, by (LF5), supported in the interval
$(l_i\Delta_H(x)-n,\, l_i\Delta_H(x)+n)$.  As a consequence, the groups
$\HF_*(H^{\# l_{j}},x^{l_{j}})$ and $\HF_*(H^{\# l_i},x^{l_i})$ have
disjoint support when $j\neq i$.

Adding a constant to $H$, we can assume without loss of generality
that $A_H(x)=0$, and hence $A_{H^{\# k}}(x^k)=0$ for all $k$. Set
$$
E:=\max_{r=1,\ldots, \hat{l}}E_0\big(H^{\# r}\big)
$$
and let $(a,\,b)$ be an arbitrary interval containing zero and such that
$(a+2E,\,b+2E)$ also contains zero, i.e., $a+2E<0<b$.

The sequence $\nu_j$ is picked as a subsequence of $l_i$, skipping at
most every second term. Assume that $\nu_1,\ldots, \nu_{j-1}=l_{i-1}$
and the periodic orbits $y_1,\ldots,y_{j-1}$ have been chosen. Our
goal is to find a $\nu_j$-periodic orbit $y=y_j$ with either
$\nu_j=l_i$ or $\nu_j=l_{i+1}$ satisfying the requirements of Theorem
\ref{thm:main}. (The first orbit $y_1$ and the period $\nu_1$ equal to
$l_1$ or $l_2$ are chosen in a similar fashion.)

Fix $m$ such that $\HF_m(H^{\# l_i},x^{l_i})\neq 0$. By (LF5),
$$
|m-\Delta_{H^{\# l_i}}(x^{l_i})|\leq n.
$$
Under the above assumptions, $y_j$ and $\nu_j$ are chosen differently
in each of the following three cases.

\emph{Case 1: $\HF_m^{(a,\,b)}(H^{\# l_i})=0$.} It is easy to see that
in this case $H$ has an $l_i$-periodic orbit $y$, killing the
contribution of $\HF_m(H^{\# l_i},x^{l_i})$ to $\HF_m^{(a,\,b)}(H^{\#
l_i})$, such that $|\Delta_{H^{\# l_i}}(y)-m|\leq n+1$ and the action
$A_{H^{\# l_i}}(y)\neq 0$ is in the interval $(a,\,b)$.  Set $\nu_j=l_i$
and $y_j=y$. It is clear that the action and index gaps for $x^{l_i}$
and $y$ are bounded from above by $\max\{|a|,b\}$ and, respectively,
$2n+1$ and the action gap is strictly positive.

\emph{Case 2: $\HF_m^{(a+E,\,b+E)}(H^{\# l_{i+1}})\neq 0$.}  Under
this assumption, there exists an $l_{i+1}$-periodic orbit $y$ with
action in the interval $(a+E,\,b+E)$ and $|\Delta_{H^{\#
l_{i+1}}}(y)-m|\leq n$.  We set $\nu_j=l_{i+1}$ and $y_j=y$. To verify
the requirements of the theorem, we first note that, since $A_H(x)=0$, we have
$$
|A_{H^{\#l_{i+1}}}(x^{l_{i+1}})-A_{H^{\# l_{i+1}}}(y)|
=|A_{H^{\# l_{i+1}}}(y)|\leq \max \{|a+E|,b+E\},
$$
Furthermore, as is easy to check,
$$
\check{l}\Delta_H(x)-2n\leq|\Delta_{H^{\#
l_{i+1}}}(x^{l_{i+1}})-\Delta_{H^{\# l_{i+1}}}(y)| \leq
\hat{l}\Delta_H(x)+2n.
$$
The latter inequalities give lower and upper bounds on the difference of
the mean indices and, by \eqref{eq:l}, show that this difference is
non-negative. (This is the only case where we cannot guarantee that
the action gap is strictly positive.)

\emph{Case 3: $\HF_m^{(a,\,b)}(H^{\# l_i})\neq 0$, but
$\HF_m^{(a+E,\,b+E)}(H^{\# l_{i+1}})= 0$.}
First note that
$$
H^{\# l_{i+1}}=H^{\# l_i}\# F, \quad\text{where}\quad F=H^{\# (l_{i+1}-l_i)},
\quad\text{and}\quad E\geq E_0(F).
$$
Using stability of filtered Floer homology as in Section
\ref{sec:stab} with $K=H^{\# l_i}$ and $F=H^{\# (l_{i+1}-l_i)}$, we
see that the quotient--inclusion map
$$
\kappa\colon \HF_m^{(a,\,b)}(H^{\# l_i})\to
\HF_m^{(a+2E,\,b+2E)}(H^{\# l_i})
$$
is necessarily zero, for $\HF_m^{(a+E,\,b+E)}(H^{\# l_{i+1}})=
0$. Since $\HF_m^{(a,\,b)}(H^{\# l_i})\neq 0$, we infer by a simple
exact sequence argument that $\HF_m^{(a,\,a+2E)}(H^{\# l_i})\neq 0$
or/and $\HF_{m+1}^{(b,\,b+2E)}(H^{\# l_i})\neq 0$.  In the former
case, there exists an $l_i$-periodic orbit $y$ with action in the
range $(a,\,a+2E)$ and $|m-\Delta_{H^{\# l_i}}(y)|\leq n$. In the
latter case, there exists an $l_i$-periodic orbit $y$ with action in
the range $(b,\,b+2E)$ and $|m+1-\Delta_{H^{\# l_i}}(y)|\leq n$. We
set $\nu_j=l_i$ and $y_j=y$. Then
$$
0<\min \{|a+2E|,b\}< |A_{H^{\# l_{i}}}(y)|\leq \max \{|a|,b+2E\}
$$
and
$$
|\Delta_{H^{\# l_i}}(x^{l_{i}})-\Delta_{H^{\# l_i}}(y)|\leq 2n+1.
$$

Combining the three cases above, it is immediate to see that the
constants $e$ and $\delta$ from the statement of the theorem are then
given by
$$
e=\max \{|a|,b+2E\}\quad\text{and}\quad
\delta=\max\{2n+1,2n+\hat{l}\Delta_H(x)\}
$$
and that the index gap or the action gap is necessarily positive. This
completes the proof of the theorem in the ``non-degenerate'' case.

\subsection{The ``degenerate'' case: $\HF_n(H,x)\neq 0$ and $\Delta_H(x)=0$}
This is the case where $x$ is a symplectically degenerate maximum of
$H$. By Theorem \ref{thm:persist-lf}, $x$ is strongly degenerate (thus
every $k$ is admissible for $x$) and $\HF_n(H^{\# k},x^k)\neq 0$ for all
$k\geq 1$.  Furthermore, Propositions 4.5 and 4.7 from
\cite{Gi:conley} assert that for every $\eps>0$ there exists an
integer $k_\eps>0$ such that for all $k>k_\eps$ we have
$$
\HF_{n+1}^{(kc,\,kc+\eps)} (H^{\# k})\neq 0,
$$
where $c=A_H(x)$. Hence, $\varphi$ has a $k$-periodic orbit $z_k$ with
$$
0<|A_{H^{\# k}}(x^k)-A_{H^{\# k}}(z_k)|<\eps
$$
and
$$
1\leq |\Delta_{H^{\# k}}(x^k)-\Delta_{H^{\# k}}(z_k)|\leq 2n+1.
$$
Thus, given a quasi-arithmetic sequence of admissible iterations
$l_i$, we can take as $\nu_j$ the ``tail'' of this sequence, i.e., its
subsequence formed by $l_i>k_\eps$, and set $y_j=z_{\nu_j}$.

This completes the proof of Theorem \ref{thm:main}.

\section{Persistence of isolation}
\labell{sec:persist-iso}

The main objective of this section, which is independent of the rest
of the paper, is to prove Proposition \ref{prop:persist-iso}
asserting that an isolated fixed point of a diffeomorphism remains
isolated under admissible iterations. In fact, we establish the following slightly
more general result:

\begin{Proposition}
\labell{prop:persist-iso2}

Let $p\in M$ be a fixed point of a family of $C^1$-smooth
diffeomorphisms $\varphi_s\colon M\to M$ with $s\in [0,\,1]$ and let
$k$ be an admissible iteration of $\varphi_s$ (for all $s$) with
respect to $p$. Then, for any $s$, every $k$-periodic point of
$\varphi_s$ in a sufficiently small neighborhood of $p$ (depending on
$k$, but not on $s$) is a fixed point of $\varphi_s$.

\end{Proposition}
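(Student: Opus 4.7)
The plan is to reduce the proposition to a local problem near $p$ and then exploit the algebraic consequences of admissibility through a parametric Lyapunov--Schmidt argument. First I would pass to local coordinates so that $M = \R^m$, $p = 0$, and $\varphi_s(0) = 0$, and write $\varphi_s(x) = A_s x + \rho_s(x)$, where $A_s := d\varphi_s|_0$; by the $C^1$-continuity of the family, $\rho_s$ and $d\rho_s$ can be made uniformly small in $s$ on a common neighborhood of $0$.

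The crucial algebraic extraction from admissibility is the factorization
\[
A_s^k - I \;=\; (A_s - I)\,P_s,
\qquad
P_s \;:=\; I + A_s + A_s^2 + \cdots + A_s^{k-1},
\]
with $P_s$ invertible on $\R^m$. On each generalized $\lambda$-eigenspace of $A_s$ with $\lambda \neq 1$, admissibility gives $\lambda^k \neq 1$, hence both $A_s - I$ and $A_s^k - I$ are invertible there and so is $P_s = (A_s^k - I)(A_s - I)^{-1}$; on the generalized $1$-eigenspace, writing $A_s = I + N$ with $N$ nilpotent, $P_s = kI + \binom{k}{2}N + \cdots$ is invertible because its leading term is $kI$. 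Consequently $\ker(A_s - I) = \ker(A_s^k - I) =: K_s$ and $\operatorname{im}(A_s - I) = \operatorname{im}(A_s^k - I) =: I_s$.

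Next I would carry out a parametric Lyapunov--Schmidt reduction simultaneously for the defect maps $f_s(x) := \varphi_s(x) - x$ and $g_s(x) := \varphi_s^k(x) - x$, whose zero sets are $\Fix(\varphi_s)$ and $\Fix(\varphi_s^k)$ respectively. Choosing continuous-in-$s$ splittings $\R^m = K_s \oplus L_s$ and $\R^m = I_s \oplus N_s$, both $df_s|_0$ and $dg_s|_0$ restrict to isomorphisms $L_s \to I_s$; the parametric implicit function theorem yields, on a common neighborhood $U$ of $0$, unique $C^1$ maps $h_s^f, h_s^g \colon U \cap K_s \to L_s$ with $\pi_{I_s} f_s(x_K, h_s^f(x_K)) = 0$ and $\pi_{I_s} g_s(x_K, h_s^g(x_K)) = 0$. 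Since $f_s(x) = 0$ implies $g_s(x) = 0$, uniqueness of the IFT solution forces $h_s^g = h_s^f$ at every $x_K$ satisfying the $f$-bifurcation equation $\pi_{N_s} f_s(x_K, h_s^f(x_K)) = 0$. The proposition is equivalent to the statement that this equality extends to all of $U \cap K_s$ and that the two bifurcation equations have identical zero sets.

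The main obstacle---and where the invertibility of $P_s$ does its essential work---is precisely this identification of the two bifurcation equations. I would exploit the telescoping identity
\[
g_s(x) \;=\; \sum_{j=0}^{k-1} f_s\!\left(\varphi_s^j(x)\right)
\]
together with the orbit recursion $\varphi_s^j(x) - x = \sum_{l<j} f_s(\varphi_s^l(x))$. Expanding each $f_s(\varphi_s^j(x))$ to first order about $0$ and summing produces a relation of the form $g_s(x) = (A_s - I)\,P_s\,x + E_s(x)$, with the error $E_s$ controlled by $\|\rho_s\|_{C^1(U)} \cdot \|x\|$. Combined with a discrete-Gronwall bound on $|f_s(\varphi_s^j(x))|$ in terms of $|f_s(x)|$---using that $k$ and $\|A_s\|$ are bounded uniformly in $s \in [0,1]$---and the uniform-in-$s$ bound $\|P_s^{-1}\| \leq C$ coming from continuity of $P_s^{-1}$ on a compact interval, the vanishing of $g_s(x)$ for $x \in U$ can be bootstrapped to force $f_s(x) = 0$ once $U$ is chosen small enough. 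The delicate step is the uniformity of all these estimates in the parameter $s$; this is why the $C^1$-continuity of the family is essential, and admissibility is precisely the hypothesis that keeps $P_s^{-1}$ uniformly bounded as $s$ varies.
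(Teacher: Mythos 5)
Your reduction of admissibility to the invertibility of $P_s = I + A_s + \cdots + A_s^{k-1}$ is exactly right, and it is a genuinely different organizing principle from the paper's proof, which instead splits $\R^m = V\times W$ according to whether the eigenvalues of $A_s$ lie near $1$ or not, treats $V$ by the inverse function theorem, and treats $W$ by conjugating $A_s\mid_W$ close to the identity and applying a discrete Wirtinger-type inequality ($\|\xi\|_{L^1}\le c(k)\|\dot\xi\|_{L^1}$ for zero-mean $\xi\colon\Z_k\to\R^m$) to the differences along the orbit --- a discrete version of Yorke's period estimate. However, as written, your key step has a gap. Expanding $g_s$ to first order \emph{about the origin} gives $g_s(x) = (A_s-I)P_sx + E_s(x) = (A_s^k-I)x + E_s(x)$ with $\|E_s(x)\|\le C\eps\|x\|$, and from $g_s(x)=0$ this yields only $\|(A_s^k-I)x\|\le C\eps\|x\|$. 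Since $A_s^k-I$ is singular exactly on the generalized $1$-eigenspace --- the only interesting case --- this constrains the component of $x$ transverse to $K_s$ and says nothing about whether $f_s(x)=0$; when $A_s=I$ the relation is vacuous. This is precisely the difficulty you flag yourself in the Lyapunov--Schmidt paragraph (showing the two bifurcation equations have the same zeros), and no first-order expansion about $0$ can resolve it, because on the kernel the problem is governed by the higher-order terms of $\rho_s$.

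The fix is to expand along the orbit, measuring the error against $\|f_s(x)\|$ rather than $\|x\|$. Set $\delta_j=f_s(\varphi_s^j(x))$. The mean value theorem gives $\delta_{j+1}=\varphi_s(x_{j+1})-\varphi_s(x_j)=A_s\delta_j+e_j$ with $\|e_j\|\le\eps\|\delta_j\|$, whence by your discrete Gronwall bound $\delta_j=A_s^j\delta_0+O(\eps)\|\delta_0\|$ for $j<k$. Summing the telescoping identity then yields
\begin{equation*}
0=g_s(x)=\sum_{j=0}^{k-1}\delta_j=P_s f_s(x)+O(\eps)\|f_s(x)\|,
\end{equation*}
and the uniform bound on $\|P_s^{-1}\|$ forces $f_s(x)=0$ once $\eps$ (hence the neighborhood of $p$) is small enough, uniformly in $s$ by compactness of $[0,1]$. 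With this correction your argument closes, dispenses with the Lyapunov--Schmidt reduction entirely, and is arguably shorter than the paper's: the invertibility of $P_s$ simultaneously does the work of the paper's inverse-function-theorem step on $V$ and of its Wirtinger inequality on $W$.
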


When $\varphi_s$ is independent of $s$, i.e., $\varphi_s\equiv
\varphi$, and $p$ is isolated, this result turns into Proposition
\ref{prop:persist-iso}.  When $p$ is uniformly isolated, we obtain the
parametric version of Proposition \ref{prop:persist-iso} stated in
Remark ~\ref{sec:persist-iso}.

\begin{proof}
  Since the problem is local, we can assume without lost of generality
  that $M=\R^m$ and $p=0$.  Fixing an admissible iteration $k$, we
  need to show that every $k$-periodic point of $\varphi_s$
  sufficiently close to $p$ is a fixed point, i.e., every fixed point
  of $\varphi_s^k$ near $p$ is in fact a fixed point of $\varphi_s$.

We start with an observation of a general nature.  Let $\xi$ be a map
$\Z_k\to \R^m$.  Set $\dot{\xi}_l=\xi_{l+1}-\xi_{l}$, where
$\xi_l=\xi(l)$ and $l\in \Z_k$, and
$\|\xi\|_{L^1}=\|\xi_1\|+\ldots+\|\xi_k\|$.  Thus, $\dot{\xi}$ is
again a map $\Z_k\to \R^{m}$ and $\|\cdot\|_{L^1}$ is a norm on the
linear space of maps $\xi$. We claim that
\begin{equation}
\labell{eq:xi-dot}
\|\xi\|_{L^1}\leq c(k)\|\dot{\xi}\|_{L^1}
\text{ whenever $\xi$ has zero mean, i.e., $\xi_1+\ldots+\xi_k=0$},
\end{equation}
where the constant $c(k)$ depends only on $k$ and $m$.  Indeed,
$1/c(k)$ is the minimum of the function $\xi\mapsto
\|\dot{\xi}\|_{L^1}$ on the $\|\cdot\|_{L^1}$-unit sphere in the
linear space of all maps $\xi$ with zero mean. It is clear that this
minimum is strictly positive, and hence $c(k)$ is finite. (The choice
of the norm in \eqref{eq:xi-dot} effects only the numerical value of
$c(k)$, which is immaterial for our purposes.)

To illustrate the idea of the proof, let us first consider, as an
example, a particular case of the proposition.

\begin{Example}
\labell{exam:pc} Assume that $\varphi_s\equiv\varphi$ is independent of $s$.
Furthermore, assume that $d\varphi_p=\id$, i.e., $\varphi=\id+f$,
where $df_p=0$ and hence $\|f\|_{C^1}$ is small on a small
neighborhood of $p$. We claim that a $k$-periodic
orbit $z=\{z_1,\ldots,z_k\}$ of $\varphi$ is necessarily a fixed point of $\varphi$,
whenever $z$ is close to $p$. Indeed, $\dot{z}_l=f(z_l)$ and
$\|\ddot{z_l}\|=\|f(z_{l+1})-f(z_l)\|\leq
\|f\|_{C^1}\cdot\|\dot{z}_l\|$.  Hence,
$$
\|\ddot{z}\|_{L^1}\leq \|f\|_{C^1}\cdot\|\dot{z}\|_{L^1}.
$$
Let the neighborhood containing the orbit $z$ be so small
that $\|f\|_{C^1}<c(k)^{-1}$. Then, applying \eqref{eq:xi-dot} with
$\xi=\dot{z}$, we conclude that $\dot{z}=0$. In other words, $z$ is a
constant $k$-periodic orbit, i.e., a fixed point, of $\varphi$. (This
argument is a discrete version the Yorke period estimate, \cite{Yo};
cf.\ \cite[pp.\ 184--185]{HZ}.)
\end{Example}

The proof of the general case is essentially a combination of the
argument from Example \ref{exam:pc} and of an application of the
inverse function theorem.

First note that by compactness of $[0,\,1]$ it suffices to prove the
result for $s$ in an arbitrarily small neighborhood $I$ of $s_0\in
[0,\,1]$. If one is not an eigenvalue of $d(\varphi_{s_0})_p$, the
same is true for $d(\varphi_{s_0}^k)_p$ since $k$ is admissible, and
the assertion follows from the inverse function theorem. Thus, we
can assume that $\lambda=1$ is among the eigenvalues.  Denote by
$S_\rho$ the circle of radius $\rho>0$ centered at one.  Let
$\rho>0$ be so small that the only eigenvalue of
$d(\varphi_{s_0})_p$ within $S_\rho$ is one and, moreover, the same
is true for $d(\varphi_{s_0}^k)_p$, i.e., $\lambda^k$ is outside
$S_\rho$ for every eigenvalue $\lambda\neq 1$.

Let us decompose $\R^m$ as $V(s)\times W(s)$ so that the linearization
$d(\varphi_s)_p$ splits as the direct sum of a linear map on $V(s)$
whose eigenvalues are outside $S_\rho$ and a linear map on $W(s)$ with
all eigenvalues within $S_\rho$. Then $k$ is admissible for all
$\varphi_s$ with $s$ in a small neighborhood $I$ of $s_0$ (depending
on $\rho$), and the spaces $V(s)$ and $W(s)$ have constant dimensions
and depend smoothly on $s$. Hence, conjugating $\varphi$ by a linear
transformation (smooth in $s$), we can make $V(s)$ and $W(s)$
independent of $s$. Set $V=V(s)$ and $W=W(s)$.  The splitting of
$d(\varphi_s)_p$ gives rise to the splitting of $d(\varphi_s)_p$ and,
once $\rho>0$ and $I$ are sufficiently small, all eigenvalues of
$d(\varphi_s^k)_p\mid_W$ are within $S_\rho$ while all eigenvalues of
$d(\varphi_s^k)_p\mid_V$ are outside $S_\rho$.  In particular,
$\id-d(\varphi_s^k)_p\mid_V$ is invertible.

In what follows, we will denote $\varphi_s$ by $\varphi$
suppressing the superscript $s$ and assuming that $s$ is in a
neighborhood $I$ of $s_0$ and that $\rho>0$ and $I$ are as
small as necessary.

Let $(v_0,w_0)$ and $(v_1,w_1)$ be $k$-periodic points of $\varphi$ in
the ball $B(r)\subset V\times W$ of radius $r$, centered at the origin
$p$. Then
\begin{equation}
\labell{eq:w-v}
\|v_1-v_0\|\leq C(r)\|w_1-w_0\|,\quad\text{where $C(r)\to 0$ uniformly in $s\in I$
as $r\to 0$.}
\end{equation}
To see this, note that for any fixed point $(v,w)$ of $\varphi^k$, we
must have $v=\psi_k(v,w)$, where $\psi_k$ the $V$-component of
$\varphi^k$. The linearization of $\id-\psi_k(\cdot,0)$ at the origin
$p$ is non-degenerate, for $k$ is admissible. Thus, by the implicit
function theorem, there exists a unique smooth map $w\mapsto v(w)$ on
a neighborhood of the origin in $W$, solving the equation
$v=\psi_k(v,w)$. In particular, $v_0=v(w_0)$ and $v_1=v(w_1)$.
Furthermore, using the fact that $d\varphi^k_p$ is split, it is easy
to show that the linearization $Dv_p$ of this map at the origin $p$ is
identically zero. Hence, $C(r)=\|Dv\|_{C^0(B(r))}\to 0$ as $r\to 0$
(uniformly in $s$), and \eqref{eq:w-v} follows.

Let us set $\varphi(v,w)=(\psi_w(v),\eta_v(w))$, where $v\in V$ and
$w\in W$. Here, we view the $V$-component $\psi$ of $\varphi$ as a
family of maps $V\to V$ parametrized by $w\in W$ and, likewise, the
$W$-component $\eta$ is a family of maps $W\to W$ parametrized by $V$.

Since $d\varphi_p\mid_W$ has all eigenvalues within the
$\rho$-neighborhood of one, $d\varphi_p\mid_W$ can be made close to the
identity, up to an error of order $O(\rho)$, by conjugating $\varphi$
by a linear map depending smoothly on $s$. As a consequence, we may
assume without loss of generality that $\eta_v$ is arbitrarily
$C^1$-close to $\id$ on a small neighborhood $B_W$ of $0\in W$ for all
$v$ in some ball $B_V$ centered at $0\in V$. Setting $\eta_v=\id+f_v$,
we chose $\rho$, the interval $I$, the conjugation, and the balls
$B_W$ and $B_V$ so that
\begin{equation}
\labell{eq:max-norm}
\max_{v\in B_V}\|f_v\|_{C^1}\leq \frac{1}{2c(k)}.
\end{equation}

Let $z=(v_0,w_0)$ be a $k$-periodic point of $\varphi$ and let
$(v_l,w_l)=z_l=\varphi^l(z)$. Our next goal is to show that $w_l=w_0$
for all $l\in \Z_k$, provided that $z$ is sufficiently close to the
origin. Without loss of generality, we may assume that $z_l\in
B(r)\subset B_V\times B_W$ for all $l$. By definition,
\begin{equation*}
\begin{cases}
v_{l+1} &= \psi_{w_{l}}(v_{l}),\\
w_{l+1} &= w_{l}+ f_{v_{l}}(w_{l}).
\end{cases}
\end{equation*}
Thus, $\dot{w}_{l+1}=w_{l+1}-w_{l}=f_{v_{l}}(w_{l})$ and
\begin{equation*}
\begin{split}
\ddot{w}_{l+1}=\dot{w}_{l+1}-\dot{w}_{l}
&=f_{v_{l}}(w_{l})-f_{v_{l-1}}(w_{l-1})\\
&=f_{v_{l}}(w_{l})-f_{v_{l}}(w_{l-1})\\
&\quad+f_{v_{l}}(w_{l-1})-f_{v_{l-1}}(w_{l-1}).
\end{split}
\end{equation*}
Therefore,
$$
\|\ddot{w}_{l+1}\|\leq \|f_{v_{l}}(w_{l})-f_{v_{l}}(w_{l-1})\|
+\|f_{v_{l}}(w_{l-1})-f_{v_{l-1}}(w_{l-1})\| .
$$
Clearly,
$$
\|f_{v_{l}}(w_{l})-f_{v_{l}}(w_{l-1})\|
\leq \|f_{v_{l}}\|_{C^1}\cdot \|\dot{w}_{l}\|
$$
and, for some constant $C$ independent of $s$, we obtain using \eqref{eq:w-v} that
\begin{equation*}
\begin{split}
\|f_{v_{l}}(w_{l-1})-f_{v_{l-1}}(w_{l-1})\| &\leq
\|f_{v_{l}}-f_{v_{l-1}}\|_{C^0}\\
&\leq C\cdot \|v_{l}-v_{l-1}\|\\
&\leq C\cdot C(r)\cdot\|\dot{w}_{l}\|.
\end{split}
\end{equation*}
Combining these inequalities, we see that
$\|\ddot{w}_{l+1}\|\leq \big(\|f_{v_{l}}\|_{C^1}+C\cdot C(r)\big)
\|\dot{w}_{l}\|$, and hence
$$
\|\ddot{w}\|_{L^1}\leq \Big(\max_{v\in B_V}\|f_{v}\|_{C^1}+C\cdot C(r)\Big)
\|\dot{w}\|_{L^1}.
$$
Therefore, by \eqref{eq:max-norm}, once $r$ is so small that $C\cdot
C(r)< c(k)^{-1}/2$, we have either
$\|\ddot{w}\|_{L^1}<c(k)^{-1}\|\dot{w}\|_{L^1}$ or $\dot{w}\equiv
0$. On the other hand, \eqref{eq:xi-dot} applied to $\xi=\dot{w}$,
yields that $\|\ddot{w}\|_{L^1}\geq c(k)^{-1}\|\dot{w}\|_{L^1}$.  Thus, as
in Example \ref{exam:pc}, $\dot{w}\equiv 0$, and hence
$w_0=\ldots=w_1$.

It remains to show that $v_1=\ldots=v_k$, for then $z=(v_0,w_0)$ is a
fixed point of $\varphi$. Note that $v_1,\ldots, v_k$ is a
$k$-periodic orbit of $\psi_{w_0}$ lying in $V\times w_0$. By the
inverse function theorem, $\psi_w$ has a unique non-degenerate fixed
point $(v(w),w)$ near $(0,w)$ for every $w$ near the origin, and $k$
is an admissible iteration of $\psi_w$.  Furthermore, applying the
inverse function theorem to $\psi_w^k$, we see that every $k$-periodic
orbit of $\psi_w$ in a small neighborhood $U_w$ of $(v(w),w)$ in
$V\times w$ is the fixed point $(v(w),w)$. Clearly, the size of $U_w$
is bounded from below when $w$ is close to the origin and $s$ is close
to $s_0$. Thus, every $k$-periodic orbit of $\psi_w$ close to $0\in V$
is in fact the fixed point of $\psi_w$. In particular, $v_0$ is the
fixed point of $\psi_{w_0}$ and $z$ is a fixed point of $\varphi$.
This concludes the proof of the proposition.
\end{proof}

\begin{Remark}
\labell{rmk:SS}

Combining the proof of Proposition \ref{prop:persist-iso2} and the
proof of the Shub--Sullivan theorem (see \cite{SS}), it is easy to see
that for all admissible $k$ the index of $\varphi^k$ at $p$ is equal,
up to a sign, to the index of $\varphi$ at $p$.
\end{Remark}


\begin{thebibliography}{CFHWI}

\bibitem[AS1]{AS1}
A. Abbondandolo, M. Schwarz,
On the Floer homology
of cotangent bundles, \emph{Comm.\ Pure Appl.\ Math.}, \textbf{59}
(2006), 254--316.

\bibitem[AS2]{AS2}
A. Abbondandolo, M. Schwarz,
Floer homology of cotangent bundles and the loop product, Preprint 2008.

\bibitem[Ar]{Ar}
V.I. Arnold,
\emph{Mathematical Methods of Classical Mechanics}, Springer-Verlag,
New York, 1974.

\bibitem[BC]{BC}
J.F. Barraud, O. Cornea,
Lagrangian intersections and the Serre spectral sequence, Preprint
2004, math.DG/0401094; to appear in \emph{Ann.\ of Math.}

\bibitem[BPS]{BPS}
P. Biran, L. Polterovich, D. Salamon,
Propagation in Hamiltonian dynamics and relative symplectic homology,
\emph{Duke Math.\ J.}, \textbf{119} (2003), 65--118.

\bibitem[Ch]{Ch}
Y. Chekanov, Hofer's symplectic energy and Lagrangian intersections,
in \emph{Contact and Symplectic Geometry} (Cambridge, 1994), 296--306;
C.B. Thomas (Ed.), Publ.\ Newton Inst., 8, Cambridge Univ.\ Press,
Cambridge, 1996.

\bibitem[CFHW]{CFHW}
K. Cieliebak, A. Floer, H. Hofer, K. Wysocki,
Applications of symplectic homology II: Stability of the action
spectrum, \emph{Math.\ Z.}, \textbf{223} (1996), 27--45.

\bibitem[Co]{Co}
C.C. Conley,
Lecture at the University of Wisconsin, April 6, 1984.

\bibitem[Fl1]{F:Morse}
A. Floer,
Morse theory for Lagrangian intersections. \emph{J.\ Differential
Geom.}, \textbf{28} (1988), 513--547.

\bibitem[Fl2]{F:grad}
A. Floer,
The unregularized gradient flow of the symplectic action, \emph{Comm.\
Pure Appl.\ Math.}, \textbf{41} (1988), 775--813.

\bibitem[Fl3]{F:c-l}
A. Floer,
Cuplength estimates on Lagrangian intersections, \emph{Comm.\ Pure
Appl.\ Math.}, \textbf{42} (1989), 335--356.


\bibitem[Fl4]{F:witten}
A. Floer,
Witten's complex and infinite-dimensional Morse theory,
\emph{J.\ Differential Geom.}, \textbf{30} (1989), 207--221.

\bibitem[Fl5]{Fl}
A. Floer,
Symplectic fixed points and holomorphic spheres, \emph{Comm.\ Math.\
Phys.}, \textbf{120} (1989), 575--611.

\bibitem[FHS]{FHS}
A. Floer, H. Hofer, D. Salamon,
Transversality in elliptic Morse theory for the symplectic action,
\emph{Duke Math.\ J.}, \textbf{80} (1995), 251--292.

\bibitem[FrHa]{FrHa}
J. Franks, M. Handel,
Periodic points of Hamiltonian surface diffeomorphisms, \emph{Geom.\
Topol.}, \textbf{7} (2003), 713--756.

\bibitem[FS]{FS}
U. Frauenfelder, F. Schlenk,
Hamiltonian dynamics on convex symplectic manifolds, \emph{Israel
J.\ Math.}, \textbf{159} (2007) 1--56.

\bibitem[Gi1]{Gi:coiso}
V.L. Ginzburg,
Coisotropic intersections, \emph{Duke Math.\ J.}, \textbf{140}  (2007), 111--163.

\bibitem[Gi2]{Gi:conley}
V.L. Ginzburg,
The Conley conjecture, Preprint 2006, math.SG/0610956.

\bibitem[GG]{GG}
V.L. Ginzburg, B.Z. G\"urel,
Periodic orbits of twisted geodesic flows and the Weinstein--Moser
theorem, Preprint 2007, arXiv:0705.1818; to appear in  \emph{Comment.\ Math.\
Helv.}

\bibitem[GoHi]{GH}
M. Goresky, N. Hingston, Loop products and closed geodesics, Preprint 2007,
arXiv:0707.3486.

\bibitem[GrMe1]{GM1}
D. Gromoll, W. Meyer,
On differentiable functions with isolated critical points,
\emph{Topology}, \textbf{8} (1969), 361--369.

\bibitem[GrMe2]{GM2}
D. Gromoll, W. Meyer,
Periodic geodesics on compact Riemannian manifolds,
\emph{J.\ Differential Geom.}, \textbf{3} (1969), 493--510.

\bibitem[G\"u]{Gu}
B.Z. G\"urel,
Totally non-coisotropic displacement and its applications to
Hamiltonian dynamics, Preprint 2007, math.SG/0702091; to appear in
\emph{Comm.\ Contemp.\ Math.}

\bibitem[Hi]{Hi}
N. Hingston,
Subharmonic solutions of Hamiltonian equations on tori, Preprint 2004;
to appear in \emph{Ann.\ of Math.}, available at
\url{http://comet.lehman.cuny.edu/sormani/others/hingston.html}.

\bibitem[HZ]{HZ}
H. Hofer, E. Zehnder,
\emph{Symplectic Invariants and Hamiltonian Dynamics}, Birk\"auser,
1994.

\bibitem[Jo]{Jo}
J. Jost,
\emph{Riemannian Geometry and Geometric Analysis}, Springer-Verlag,
New York, 2002.

\bibitem[LeC]{LeC}
P. Le Calvez,
Periodic orbits of Hamiltonian homeomorphisms of surfaces,
\emph{Duke Math.\ J.}, \textbf{133} (2006),  125--184.

\bibitem[Lo]{Lo}
Y. Long,
\emph{Index Theory for Symplectic Path with Applications},
Birk\"auser, 2002.

\bibitem[MS]{mdsa}
D. McDuff, D. Salamon,
\emph{J-holomorphic Curves and Symplectic Topology}, Colloquium
publications, vol.\ 52, AMS, Providence, RI, 2004.

\bibitem[Po]{Poz} M. Po\'zniak,
Floer homology, Novikov rings and clean intersections, in
\emph{Northern California Symplectic Geometry Seminar}, 119--181,
Amer.\ Math.\ Soc.\ Transl.\ Ser.\ 2, \textbf{196}, AMS, Providence,
RI, 1999.

\bibitem[PSS]{PSS}
S. Piunikhin, D. Salamon, M. Schwarz,
Symplectic Floer--Donaldson theory and quantum cohomology, in
\emph{Contact and Symplectic Geometry} (Cambridge, 1994), 171--200;
C.B. Thomas (Ed.), Publ.\ Newton Inst., 8, Cambridge Univ.\ Press,
Cambridge, 1996.

\bibitem[Sa1]{Sa:london}
D.A. Salamon,
Morse theory, the Conley index and Floer homology, \emph{Bull.\
L.M.S.}, \textbf{22} (1990), 113--140.

\bibitem[Sa2]{Sa}
D.A. Salamon,
Lectures on Floer homology, in \emph{Symplectic Geometry and
Topology}, Eds: Y. Eliashberg and L. Traynor, IAS/Park City
Mathematics series, \textbf{7} (1999), pp.\ 143--230.

\bibitem[SW]{SW}
D.A. Salamon, J. Weber,
Floer homology and the heat flow, \emph{Geom.\ Funct.\ Anal.}, \textbf{16} (2006),
1050--1138.

\bibitem[SZ]{SZ}
D. Salamon, E. Zehnder,
Morse theory for periodic solutions of Hamiltonian systems and the
Maslov index, \emph{Comm.\ Pure Appl.\ Math.}, \textbf{45} (1992),
1303--1360.

\bibitem[Sc1]{schwarz:book}
M. Schwarz,
\emph{Morse Homology}, Birkh\"auser, 1993.

\bibitem[Sc2]{Sc:thesis} M. Schwarz,
\emph{Cohomology Operations from $S^1$-Cobordisms in Floer Theory},
Ph.D. Thesis, Dissertation ETH 11182, Swiss Federal Institute of
Technology, Zurich, 1995.

\bibitem[Sc3]{schwarz}
M. Schwarz,
On the action spectrum for closed symplectically aspherical manifolds,
\emph{Pacific J.\ Math.}, \textbf{193} (2000), 419--461.

\bibitem[SS]{SS}
M. Shub, D. Sullivan,
A remark on the Lefschetz fixed point formula for differentiable maps,
\emph{Topology}, \textbf{13} (1974), 189--191.

\bibitem[Vi1]{Vi:gen}
C. Viterbo,
Symplectic topology as the geometry of generating functions,
\emph{Math.\ Ann.}, \textbf{292} (1992), 685--710.

\bibitem[Vi2]{Vi}
C. Viterbo, Functors and computations in Floer cohomology, I,
\emph{Geom.\ Funct.\ Anal.}, \textbf{9} (1999), 985--1033.

\bibitem[We1]{We71}
A. Weinstein,
Symplectic manifolds and their Lagrangian submanifolds, \emph{Advances
in Math.}, \textbf{6} (1971), 329--346.

\bibitem[We2]{We77}
A. Weinstein,
\emph{Lectures on Symplectic Manifolds}. Expository lectures from the
CBMS Regional Conference held at the University of North Carolina,
March 8--12, 1976. Regional Conference Series in Mathematics, No.\
29, AMS, Providence, RI, 1977.

\bibitem[Yo]{Yo}
J.A. Yorke, Periods of periodic solutions and the Lipschitz constant,
\emph{Proc.\ Amer.\ Math.\ Soc.}, \textbf{22} (1969), 509-512.

\end{thebibliography}
\end{document}